\pdfoutput=1
\documentclass{article}

\usepackage[utf8]{inputenc}
\usepackage[T1]{fontenc}
\usepackage{arxiv}
\usepackage{comment}
\usepackage{url}
\usepackage{microtype}

\usepackage{amsmath}
\usepackage{amssymb}
\usepackage{mathrsfs}
\usepackage{amsthm}
\usepackage{amsfonts}
\usepackage{mathtools}
\usepackage{stmaryrd}
\usepackage{bm}
\usepackage{bbold}

\allowdisplaybreaks
\DeclarePairedDelimiter\abs{\lvert}{\rvert}
\DeclarePairedDelimiter\norm{\lVert}{\rVert}

\usepackage{booktabs}
\usepackage{multirow}
\usepackage{nicefrac}
\usepackage{multicol}
\usepackage{diagbox}
\usepackage{array}
\usepackage{enumerate}
\usepackage[normalem]{ulem}

\usepackage{graphicx}
\graphicspath{ {./figs/} }
\usepackage{caption}
\usepackage{subcaption}

\usepackage{tikz}
\usetikzlibrary{shapes.geometric, arrows.meta, positioning}

\definecolor{myyellow}{RGB}{235, 175, 0}
\definecolor{myred}{RGB}{230, 30, 30}
\definecolor{mycyan}{RGB}{0, 180, 240}
\definecolor{mypurple}{RGB}{120, 30, 160}
\definecolor{myblue}{RGB}{20, 50, 90}

\usepackage[ruled,vlined]{algorithm2e}
\usepackage[toc,header]{appendix}
\usepackage[tickmarkheight=0.1cm]{todonotes}

\numberwithin{equation}{section}
\newtheorem{theorem}{Theorem}[section]

\newtheorem{corollary}{Corollary}[section]

\newtheoremstyle{iremark}
  {\topsep}{\topsep}{\upshape}{0pt}{\itshape}{.}{5pt plus 1pt minus 1pt}
  {\thmname{#1}\thmnumber{ \itshape#2}\thmnote{ (#3)}}

\theoremstyle{iremark}
\newtheorem{remark}{Remark}

\newenvironment{nalign}{\begin{equation}\begin{aligned}}{\end{aligned}\end{equation}\ignorespacesafterend}

\newcommand{\spc}{{\hspace{0.5cm}}}

\newcommand{\jmh}{{j-\frac{1}{2}}}
\newcommand{\jph}{{j+\frac{1}{2}}}
\newcommand{\half}{\frac{1}{2}}

\newcommand*\dif{\mathop{}\!\mathrm{d}}

\newcommand{\iph}{i+\frac{1}{2}}
\newcommand{\imh}{i-\frac{1}{2}}

\newenvironment{frcseries}{\fontfamily{frc}\selectfont}{}

\newcommand{\no}{\nonumber}

\newcounter{example}
\renewcommand{\theexample}{\arabic{example}}
\newenvironment{example}[1][]{%
    \refstepcounter{example}%
    \noindent\textbf{Example \theexample.} %
    \ifx&#1&\else\label{#1}\fi
}{\par}

\usepackage{hyperref}
\hypersetup{
    colorlinks=true,
    linkcolor=blue,
    citecolor=magenta,
    urlcolor=red,
    linktoc=all,
    pdfborder={0 0 1}
}

\title{Semi-discrete Active Flux method for two-dimensional Hyperbolic systems with Coriolis Source Terms}

\author{
Nikhil Manoj\\
Institute of Mathematics\\
University of Wuerzburg\\
Wuerzburg, Germany, 97074\\
\texttt{nikhil.manoj@uni-wuerzburg.de} \\
\And
Wasilij Barsukow\\
Institut de Mathématiques de Bordeaux (IMB)\\
CNRS UMR 5251\\
351 Cours de la Libération\\
33405 Talence, France\\
\texttt{wasilij.barsukow@math.u-bordeaux.fr}\\
\emph{Present address:}
Imperial College London\\
CNRS IRL 2004\\
Huxley Building\\
South Kensington Campus\\
London, United Kindgom\\
\And
Christian Klingenberg\\
Institute of Mathematics\\
University of Wuerzburg\\
Wuerzburg, Germany, 97074\\
\texttt{christian.klingenberg@uni-wuerzburg.de} \\
}

\begin{document}

\maketitle

\begin{abstract}
We investigate the semi-discrete Active Flux numerical method on Cartesian grids applied to two-dimensional linear and nonlinear shallow water equations with Coriolis source terms. These hyperbolic systems admit non-trivial stationary solutions governed by a geostrophic equilibrium. In this setting, we analyze the stationarity-preserving properties of our semi-discrete Active Flux formulation. To do so for the linear system, we apply a discrete spatial Fourier transform to the semi-discrete method and analyze the resulting evolution matrix. We show that its discrete kernel is a non-trivial discretization of the geostrophic equilibrium, which implies that the method preserves a discrete geostrophic equilibrium state. This is an inherent feature of Active Flux and is achieved without any additional modifications. Numerical experiments confirm the method's ability to maintain geostrophic stationary states for the linear system. Additionally, we numerically investigate the performance of the Active Flux method for the nonlinear shallow water system.
\end{abstract}





\section{Introduction}

A multitude of physical phenomena in the real world are modeled by multidimensional hyperbolic conservation laws with source terms. In particular, the shallow water equations are extensively utilized in oceanography, tsunami modeling, and meteorology. They constitute a nonlinear system of hyperbolic partial differential equations where source terms appear when modeling non-flat topography, Coriolis force effects due to the rotation of the Earth \cite{audusse2009, chertock2018, Liu2019, audusse2021, audusse2025, gonzalez2025geostrophic}, bottom friction, wind shear stress at the free surface etc. Of particular interest are the asymptotic regimes characterized by low Froude, low Rossby, and low Strouhal numbers \cite{audusse2018, chertock2018}. In certain asymptotic regimes, the nonlinear shallow water equations can be understood by suitable linearizations, balancing pressure gradients against rotational forces.  In this work, we are interested in one such linearization of the two-dimensional shallow water equations over a flat topography in a rotating frame. Specifically, we study the two-dimensional linear acoustics equations augmented with Coriolis source terms. Such linear hyperbolic systems with Coriolis source terms have been studied previously in \cite{bernard2008, thuburn2009, leroux2012,  audusse2018, audusse2021, audusse2025, barsukow2025b}, and exhibit non-trivial stationary states governed by a geostrophic equilibrium. An important property expected of numerical discretizations to such problems is well balancing, the ability to preserve non-trivial stationary states described by a balance of the flux and source terms, even on coarse meshes. Many physically relevant waves are perturbations of such steady states \cite{desveaux2022} and would otherwise become polluted by spurious waves originating in the method's failure to preserve the background equilibrium. Thus, numerical methods capable of preserving non-trivial stationary states are crucial for practical, long-time simulations \cite{castro2008, audusse2018, barsukow_mirco26}.

\par
In this article, we focus on the semi-discrete Active Flux numerical method \cite{abgrall_barsukow2023, Barsukow2025Fourier, barsukow_lisa_2026, duan2025}.  Active Flux is a generalized finite volume framework that employs a globally continuous spatial reconstruction, thus avoiding the use of  exact or approximate Riemann solvers. In addition to the cell averages which are evolved using a conservative formulation, Active Flux independently evolves point values located at the cell interfaces. Originally proposed by Roe \cite{EymannRoe2011ActiveFlux, EymannRoe2011ActiveFluxSystems, EymannRoe2013MultidimensionalActiveFlux}, the method has since been explored for various classes of hyperbolic equations \cite{barsukow2023, barsukow_2021_jsc, barsukow2021, barsukow_berberich24, duan_2025_MHD}. Two primary directions have emerged in recent years for implementing the time evolution in Active Flux: fully discrete and semi-discrete. The fully discrete Active Flux method \cite{barsukow_hohm_2019, barsukow_2021_jsc, roe2021} relies on deriving problem-specific exact or approximate evolution operators to update the interface point values in a single stage. The semi-discrete Active Flux strategy \cite{abgrall2023b, abgrall_barsukow2023,  barsukow_lisa_2026, duan2025} is based on a method-of-lines approach, combining a semi-discretization in space for both the point values and cell averages with a standard strong stability preserving Runge-Kutta (SSP-RK) \cite{gottlieb1998otalVD, gottliebsigalshu2001} time integration. The semi-discrete approach allows to apply the method to a wide variety of hyperbolic problems without significant changes in the algorithm.

\par The Active Flux method has been found to intrinsically  satisfy certain desirable properties, like the stationarity preservation property, in various scenarios, see \cite{barsukow_hohm_2019, barsukow2021, barsukow2023, Barsukow2025Fourier}.  For  \textcolor{black}{homogenoeus hyperbolic} linear systems in multiple spatial dimensions, \cite{barsukow2019} characterized the stationarity preservation of standard finite volume and finite difference methods. This idea was further explored for Active Flux in \cite{barsukow2023, Barsukow2025Fourier} and it was found that Active 
Flux naturally admits discretizations of the relevant stationary states without any modification. It is worth noting that this is not in general true for finite volume/ finite difference numerical methods, which require additional modifications to achieve stationarity preservation. 
Further, for hyperbolic systems with source terms, the Active Flux method has been studied in various scenarios. A linear problem was studied in \cite{barsukow2021}. For one-dimensional shallow water equations with topography, a well-balanced fully discrete Active Flux method was proposed in \cite{barsukow_berberich24}. The closely related PAMPA methods were proposed for one-dimensional shallow water equations with general source terms in \cite{abgrall2025pampaSWE}. For the ideal MHD equations appended with a Godunov-Powell source term, an Active Flux method was designed in \cite{duan_2025_MHD}, and a PAMPA method for ideal MHD was  studied in \cite{liu_2025MHD}.  

\par 
The main objective of this work is to propose and investigate the stationarity preservation properties of a semi-discrete Active Flux numerical method for two classes of hyperbolic systems with Coriolis source terms: (i) linear acoustics system, and (ii) shallow water system.  In the same spirit of  \cite{barsukow2019, barsukow2023, Barsukow2025Fourier}, for the linear acoustics system with Coriolis source terms, we use a Fourier transform analysis on the proposed method, to obtain a semi-discrete evolution matrix. The ability of the numerical method to preserve nontrivial stationary states can be understood by studying the existence of a nontrivial kernel for this evolution matrix, see \cite{barsukow2019}. Using this idea, we establish that the method admits a nontrivial discrete stationary state. In other words, we show that there exists a discretization of the analytical geostrophic equilibrium state that is preserved by the numerical method. Furthermore, a fully discrete von Neumann stability analysis reveals that the inclusion of the rotational source term imposes a stability threshold which is the same as was found for pure acoustics in \cite{Barsukow2025Fourier}. Additionally, we also investigate the Active Flux method for the nonlinear shallow water equations with Coriolis source terms \cite{chertock2018, audusse2025}. The method is demonstrated to perform well for the nonlinear system, especially in low Froude number regimes, without requiring any structural modifications.

\par
The remainder of this article is organized as follows.  Section \ref{sec:system} elaborates on the hyperbolic system of PDE under consideration and their stationary states. In Section \ref{sec:afscheme}, we present the Active Flux numerical method to approximate the PDE. The Fourier transform of the proposed numerical method is investigated in Section \ref{sec:fourier} and the kernel of the resultant semi-discrete evolution matrix is analyzed to understand its stationarity preservation property. Numerical experiments for the linear and nonlinear systems form the content of Section \ref{sec:numericalexp}. We conclude the article with some remarks in Section \ref{sec:conclusion}. Technical details pertaining to the Fourier analysis are detailed in the Appendix.

\section{Hyperbolic systems with Coriolis source terms}\label{sec:system}
Consider the dimensionless shallow water equations over a flat topography (see \cite{audusse2018, audusse2017, audusse2025}) with Coriolis source terms, given by
\begin{align}\label{eq:swe_noncons}
    \text{St} \partial_t h + \nabla \cdot (h {\boldsymbol{u}}) &= 0,  \\ \text{St} \partial_t (h {\boldsymbol{u}}) + \nabla \cdot (h {\boldsymbol{u}} \otimes {\boldsymbol{u}}) + \frac{1}{\text{Fr}^2} \nabla \left(\frac{h^2}{2}\right) &= -\frac{1}{\text{Ro}} h {\boldsymbol{u}}^\perp, \nonumber
\end{align}
where $h$ and ${\boldsymbol{u}} = ({u}, {v})$ denote the water depth and velocity, ${\boldsymbol{u}}^{\perp} = (-{v}, {u}).$ For constants $U, H, L,$ and $T$ representing the characteristic velocity, vertical length, horizontal length, and time scales of the flow,  $\text{St}$, $\text{Fr}$, and $\text{Ro}$ are the Strouhal, Froude, and Rossby numbers, respectively defined as $\displaystyle \mathrm{St} = \frac{L}{U T}, \, \mathrm{Fr} = \frac{U}{\sqrt{g H}}, \, \mathrm{Ro} = \frac{U}{\Omega L},$
where $g$ and $\Omega$ denote the gravitational acceleration and the angular velocity of the Earth, respectively. 

\subsection{Low Froude - low Rossby asymptotics}
For large-scale oceanographic applications, typical characteristic values are of the order of (see \cite{audusse2017})
\begin{equation}
    U \approx 1 \, \mathrm{m \cdot s^{-1}}, \quad L \approx 10^6 \, \mathrm{m}, \quad H \approx 10^3 \, \mathrm{m}, \quad \Omega \approx 10^{-4} \, \mathrm{rad \cdot s^{-1}}.
\end{equation}
Under these physical conditions, $\mathrm{Fr} = 10^{-2}$ and $\mathrm{Ro} = 10^{-2}$. We are thus led to consider the asymptotic behaviour with $\mathrm{Fr} = \mathcal{O}(\epsilon)$ and $\mathrm{Ro} = \mathcal{O}(\epsilon)$ in the limit of  $\epsilon \rightarrow 0$. 
Now, considering small Froude and Rossby numbers and an $\mathcal{O}(1)$ Strouhal number  {(or equivalently, consider long time evolution)}, i.e, 
$\mathrm{Fr} = \mathrm{Ro} = \epsilon, \, \mathrm{St} = \mathcal{O}(1),$  noting that $\displaystyle \nabla \left(\frac{h^2}{2}\right) = h \nabla h$ and rescaling the discharge equations in \eqref{eq:swe_noncons} yields
\begin{nalign}\label{eq:swe_noncons_rescaled}
    \partial_t h + \nabla \cdot (h {\boldsymbol{u}}) &= 0,  \\  \epsilon^2\left(\partial_t (h {\boldsymbol{u}}) + \nabla \cdot (h {\boldsymbol{u}} \otimes {\boldsymbol{u}}) \right) +   h \nabla h &= -\epsilon h {\boldsymbol{u}}^\perp, 
\end{nalign}

Consider the formal asymptotic expansions 
\begin{align}\label{eq:asy_exp}
&h = h_0 + \epsilon h_1 + \epsilon^2 h_2 + \mathcal{O}(\epsilon^3),
&{\boldsymbol{u}} = {\boldsymbol{u}}_0 + \epsilon {\boldsymbol{u}}_1 + \epsilon^2 {\boldsymbol{u}}_2 + \mathcal{O}(\epsilon^3),
\end{align}
which when substituted into the discharge equation in \eqref{eq:swe_noncons_rescaled},
in the leading order yields 
\begin{align}\label{eq:lakeatrest_init}
    h_0 \nabla h_0 &= 0.
\end{align} From \eqref{eq:lakeatrest_init}, assuming that $h_0 > 0,$ one obtains the \emph{lake-at-rest background state}
\begin{align}\label{eqa:lar}
    \nabla h_0 &= 0.
\end{align} At the next order, we obtain
\begin{align*}
    h_0 \nabla h_1 = - h_{0} \boldsymbol{u}_{0}^{\perp}.
\end{align*} Thus, we obtain the following \emph{geostrophic equilibrium}
\begin{align}\label{eq:geo_eq}
   \nabla h_1 = - \boldsymbol{u}_{0}^{\perp}.
\end{align}

\begin{remark}
    We note that the continuity equation in \eqref{eq:swe_noncons} does not contribute any further equations at this order that would be independent from the one already obtained. In fact, substituting the expansions \eqref{eq:asy_exp} into the continuity equation, together with the assumption of a lake-at-rest background state that satisfies \eqref{eqa:lar}, results in the divergence-free condition
    \begin{align}\label{geo_eq_divfree}
        \nabla \cdot \boldsymbol{u}_0 = 0.
    \end{align} 
    If a solution satisfies \eqref{eq:geo_eq}, then it straightforward that $ \nabla \cdot \boldsymbol{u}_0 = \nabla \cdot (- \nabla h_{1})^{\perp} = 0.$ One thus observes that the geostrophic equilibria are given by an under-determined system of equations (two equations \eqref{eq:geo_eq} for the three unknowns $\boldsymbol{u}_0, h_{1}$).
\end{remark}

\begin{remark}
    The asymptotic limit yielding the geostrophic equilibrium state \eqref{eq:geo_eq} need not be a stationary solution to the nonlinear system \eqref{eq:swe_noncons}. In other words, for an initial datum which satisfies \eqref{eq:geo_eq}, it is not true in general that
    \begin{align}
        \partial_t h = 0, \quad \partial_t (h \boldsymbol{u}) &= 0. 
    \end{align}
\end{remark}

\subsection{Linearization around a state of rest}\label{subsec:linearization}
Next, we linearize the system \eqref{eq:swe_noncons} around a state of state of rest with constant height and vanishing velocity \cite{thuburn2009}
\begin{align}
    h &= 1 + \delta \eta, \quad \boldsymbol{u} = \delta \boldsymbol{u},
\end{align}
with $\delta \ll 1$, neglecting $\mathcal{O}(\delta^2)$ terms and defining  $p := \eta / \mathrm{Fr},$ results in
\begin{equation*}
\begin{aligned}
    \mathrm{St}\partial_t p + \frac{1}{ \mathrm{Fr}} \nabla \cdot \boldsymbol{u} &= 0, \\
    \mathrm{St}\partial_t \boldsymbol{u} + \frac{1}{ \mathrm{Fr}} \nabla p &= -\frac{1}{ \mathrm{Ro}} \boldsymbol{u}^\perp.
\end{aligned}
\end{equation*}
Further, defining $\displaystyle a_* = \frac{1}{(\mathrm{St} \mathrm{Fr})}$ and $\displaystyle c = \frac{1}{(\mathrm{St} \mathrm{Ro})}$, we recover the standard linear acoustics equations with a Coriolis source term (see \cite{bernard2008, thuburn2009, leroux2012}) 

\begin{align}\label{eq:coriolis}
    \frac{\partial }{\partial t} {\boldsymbol{q}}+ \frac{\partial}{\partial x} \boldsymbol{f}^x(\boldsymbol{q}) + \frac{\partial}{\partial y} \boldsymbol{f}^y(\boldsymbol{q}) ={{\boldsymbol{s}(\boldsymbol{q})}},
    \end{align}
where $\boldsymbol{q} = \begin{pmatrix}
         u \\ v \\ p
    \end{pmatrix},$ $\boldsymbol{f}^x(\boldsymbol{q}) =  \begin{pmatrix}
         a_{*}p \\ 0 \\ a_{*}u    \end{pmatrix},$ $\boldsymbol{f}^y(\boldsymbol{q}) =  \begin{pmatrix}
         0 \\ a_{*}p \\ a_{*}v    \end{pmatrix}$ and ${\boldsymbol{s}(\boldsymbol{q})} =  \begin{pmatrix}
         cv \\ -cu \\ 0    \end{pmatrix}.$
    The system \eqref{eq:coriolis} can be equivalently written in the  form 
\begin{align}\label{eq:quasilinear}
    \frac{\partial}{\partial t} \boldsymbol{q} + {J}_x(\boldsymbol{q})  \frac{\partial}{\partial x} \boldsymbol{q} + {J}_y(\boldsymbol{q})  \frac{\partial}{\partial y} \boldsymbol{q} = S \boldsymbol{q},
\end{align} where  $S = \left(\begin{array}{ccc}
         0 & c & 0 \\ -c & 0 & 0 \\ 0 & 0 & 0
    \end{array}\right)$ and ${J}_x(\boldsymbol{q}) = J_x$ and ${J}_y(\boldsymbol{q}) = J_y$ are the globally constant (state-independent) Jacobian matrices for the linear system \eqref{eq:coriolis}, given by 
\begin{align}\label{eq:jacob_lin}
    J_x := \left(\begin{array}{ccc}
         0 & 0 & a_{*} \\ 0& 0& 0 \\ a_{*}& 0 & 0
    \end{array}\right), \quad \quad J_y := \left(\begin{array}{ccc}
         0 & 0 & 0 \\ 0& 0& a_{*} \\ 0& a_{*} & 0
    \end{array}\right).
\end{align} 

\subsection{Stationary solutions for the linear system}\label{subsec:stationary}

The existence of stationary solutions for a general two-dimensional linear hyperbolic system with source terms  \eqref{eq:quasilinear} can be understood using the Fourier expansion of the solution, see \cite{barsukow2019, Barsukow2025Fourier}. Solutions $\boldsymbol{q}:\mathbb{R}^{+} \times \mathbb{R}^2 \rightarrow \mathbb{R}^{m}$ to \eqref{eq:coriolis}, can be expressed as linear combinations of Fourier modes of the form 
\begin{align}\label{eq:fourier_mode}
\hat{\boldsymbol{q}}(t, \boldsymbol{k})\exp(\mathbb{i}\boldsymbol{k}\cdot \boldsymbol{x}), 
\end{align} for $\boldsymbol{k} \in \mathbb{R}^2,$ where $\boldsymbol{k}$ denotes the wave number vector which denotes the spatial frequency of the mode and $\hat{\boldsymbol{q}}(t, \boldsymbol{k})$  denotes the amplitude. To analyze the existence of stationary solutions, we substitute the mode \eqref{eq:fourier_mode} into \eqref{eq:quasilinear} which yields
\begin{align}\label{eq:fourier_analytic_sub}
    \frac{d}{dt} \hat{\boldsymbol{q}} + \mathbb{i} \boldsymbol{J} \cdot \hat{\boldsymbol{q}} = S \hat{\boldsymbol{q}},
 \end{align}
where $\boldsymbol{J} := (J_x, J_y).$ It follows from \eqref{eq:fourier_analytic_sub} that a Fourier mode \eqref{eq:fourier_mode} is stationary if $\hat{\boldsymbol{q}} \in \textrm{ker}(\mathbb{i}\boldsymbol{J}  \cdot \boldsymbol{k}  - S).$ Note that $\boldsymbol{k}$ determines the spatial behaviour of the mode. Stationary states of \eqref{eq:quasilinear} that require particular values of $\boldsymbol{k}$ are called \emph{trivial}: for them, no component of $\boldsymbol{q}$ can be chosen freely as a function of the independent spatial variable $\boldsymbol{x}$, (see \cite{barsukow2019}) for more details. For instance, for the system \eqref{eq:coriolis}, upon choosing $\boldsymbol{k} := (0,0)$ and $\hat{\boldsymbol{q}} = (0, \dots, 0) \in \mathbb{R}^m,$ one obtains \emph{a trivial stationary state}, which is globally constant. If $\mathrm{det}(\mathbb{i}\boldsymbol{J}  \cdot \boldsymbol{k}  - S) = 0$ for all $\boldsymbol{k} \in \mathbb{R}^2,$ then there exist non-trivial stationary states for the linear system \eqref{eq:coriolis}. This is indeed the case for the system \eqref{eq:coriolis}, since for any $\boldsymbol{k} \in \mathbb{R}^2,$ \begin{align}
    \det(\mathbb{i}\boldsymbol{J}\cdot\boldsymbol{k} - S) &= \begin{vmatrix} 0 & -c & \mathbb{i}k_x a_* \\ c & 0 & \mathbb{i}k_y a_* \\ \mathbb{i}k_x a_* & \mathbb{i}k_y a_* & 0 \end{vmatrix} \label{eq:analy_matr} =  c k_x k_y a_*^2 - c k_x k_y a_*^2 = 0.
\end{align}
The kernel of the matrix \eqref{eq:analy_matr} is spanned by  $\displaystyle(-\mathbb{i}k_y a_*,  \mathbb{i}k_x a_*, c)^{T}.$ For \eqref{eq:fourier_mode}  to be a nontrivial stationary solution, the vector $\hat{\boldsymbol{q}} = (\hat{u}, \hat{v}, \hat{p})^{T}$ has to be in the kernel, which is equivalent to $\displaystyle \hat{p}a_{*}\mathbb{i}k_y = -c \hat{u}, \, \hat{p}a_{*}\mathbb{i}k_x = c \hat{v}, \, k_x\hat{u} + k_y \hat{v} = 0,$ easily recognizable as the Fourier transforms of the relations
\begin{align}
        & a_{*}\nabla p = -c\boldsymbol{u}^{\perp}, \label{eq:geo_balance}\\
        & \nabla \cdot \boldsymbol{u}= 0. \label{eq:divfree}
    \end{align}
The  nontrivial  stationary state  described by the equations \eqref{eq:geo_balance}, \eqref{eq:divfree} for the linear system \eqref{eq:coriolis} is called a \emph{geostrophic equilibrium}, by analogy with the shallow water case \eqref{eq:geo_eq}-\eqref{geo_eq_divfree}. Observe that for the linear system \eqref{eq:coriolis}, it \emph{is} a stationary state. As before, the condition \eqref{eq:divfree} is a consequence of \eqref{eq:geo_balance}. \textcolor{black}{In the rest of the article, for the sake of simplicity, we consider the system \eqref{eq:coriolis} with $a_* = 1.$}

\section{The semi-discrete Active Flux method}\label{sec:afscheme}

In this section, we propose an Active Flux numerical method  to approximate the solutions to \eqref{eq:coriolis}. The Active Flux method evolves point values at the cell interfaces, in addition to the cell-averages. 
\subsection{Discretization and degrees of freedom}
The two-dimensional spatial domain $\mathbb{R}^2$ is discretized into a Cartesian grid with uniform mesh-sizes $\Delta x$ and $\Delta y,$ in the $x$ and $y$ directions, respectively: 
\begin{align*}
\displaystyle \mathbb{R}^2 &:= \bigcup_{i,j \in \mathbb{Z}^2} C_{ij}, & C_{ij} = \left[\left(i - \half\right)\Delta x, \left(i + \half\right)\Delta x\right] \times \left[\left(j - \half\right)\Delta y, \left(j + \half\right)\Delta y\right]. 
\end{align*}
At any given time $t \geq 0,$ we  consider nine degrees of freedom (DoF) for the cell $C_{ij}, \, i,j \in \mathbb{Z}$: (i) the cell-average $\displaystyle \bar{\boldsymbol{q}}_{i,j}(t),$ and (ii) the point value evaluations at the four nodes  $\boldsymbol{q}_{i \pm \half, j \pm \half}(t)$, and the four edge centers $\displaystyle \boldsymbol{q}_{ i \pm \half, j}(t), \boldsymbol{q}_{i, j \pm \half}(t).$ 
\par 

Since each nodal point value ($\boldsymbol{q}_{\iph, \jph}(t)$) is shared by four cells and each edge center point value ($\boldsymbol{q}_{\iph, j}(t)$ or $\boldsymbol{q}_{i, \jph}(t)$) is shared by two cells, the effective degrees of freedom corresponding to a cell is $ \left(4 \times \frac{1}{4}\right) + \left(4 \times \frac{1}{2}\right) + 1 = 4.$ For the sake of consistency, in the rest of the article, we set the four degrees of freedom belonging to the cell $C_{ij}$ to be the following:
\begin{align*} 
   \boldsymbol{q}^{N}_{ij}(t) &= \boldsymbol{q}_{\iph, \jph}(t), & \boldsymbol{q}^{E_{H}}_{ij}(t) &= \boldsymbol{q}_{i, \jph}(t), \\ \boldsymbol{q}^{E_{V}}_{ij}(t) &= \boldsymbol{q}_{\iph, j}(t), &\boldsymbol{q}^{A}_{ij}(t) &= \bar{\boldsymbol{q}}_{i, j}(t).
\end{align*} A visualization of a typical cell $C_{i,j},$ with the corresponding nine degrees of freedom (DoF) is given in Figure \ref{eq:af_dof}.\\
\textbf{Active Flux reconstruction.} 
 In each cell $C_{ij}, \, i,j \in \mathbb{Z},$  we consider the unique biparabolic reconstruction $\tilde{\boldsymbol{{q}}}_{ij}(\cdot, \cdot, t): C_{ij} \rightarrow \mathbb{R}^m,$  determined by the following nine relations: \begin{align*}
\frac{1}{\Delta x \Delta y} \int \int \tilde{\boldsymbol{q}}_{ij}(t) \dif x \dif y &= \bar{\boldsymbol{q}}_{i,j}(t), & \tilde{\boldsymbol{q}}_{ij}(x_{i \pm \half}, y_j, t)=\boldsymbol{\tilde{q}}_{i \pm \half, j, t},  \\  \tilde{\boldsymbol{q}}_{ij}(x_{i}, y_{j \pm \half}) &=  \boldsymbol{{q}}_{i, j \pm \half}(t), & \tilde{\boldsymbol{q}}_{i, j }(x_{i \pm \half}, y_{j \pm \half}, t)&=  \boldsymbol{{q}}_{i \pm \half, j \pm \half}(t).  \nonumber
\end{align*} Since the point values $\boldsymbol{{q}}_{\iph, j}(t), \boldsymbol{{q}}_{i, \jph}(t)$ and $ \boldsymbol{{q}}_{\iph, \jph}(t), \,\, i, j \in \mathbb{Z}$ are shared by adjacent cells, this leads to a globally continuous reconstruction $ \tilde{\boldsymbol{q}}(\cdot, \cdot, t) $ defined by
\begin{align}\label{eq:global_recon}
    \tilde{\boldsymbol{q}} (x, y, t) :=    \tilde{\boldsymbol{q}}_{ij} (x, y, t)  \quad \mbox{for} \, (x,y) \in C_{ij}.
\end{align} The reconstruction  \eqref{eq:global_recon} need not be differentiable at the Cartesian cell interfaces, i.e., the left and right traces of the spatial partial derivatives of $\boldsymbol{\tilde{\rho}}(\cdot, \cdot, t)$ may not coincide. 

\begin{figure}[htbp]
\centering
\begin{tikzpicture}[
    scale=0.8,
    >={Straight Barb[length=2mm]},
    open tri/.style={regular polygon, regular polygon sides=3, draw=myyellow, fill=white, thick, minimum size=8pt, inner sep=0pt},
    open red diam/.style={diamond, draw=myred, fill=white, thick, minimum size=8pt, inner sep=0pt},
    open cyan diam/.style={diamond, draw=mycyan, fill=white, thick, minimum size=8pt, inner sep=0pt},
    solid tri/.style={regular polygon, regular polygon sides=3, fill=myyellow, minimum size=8pt, inner sep=0pt},
    solid red diam/.style={diamond, fill=myred, minimum size=8pt, inner sep=0pt},
    solid cyan diam/.style={diamond, fill=mycyan, minimum size=8pt, inner sep=0pt},
    solid purp sq/.style={rectangle, fill=mypurple, minimum size=7pt, inner sep=0pt}
]
\centering

\draw[thick, myblue] (-2,-2) rectangle (2,2);

\draw[->, thick] (-1.2,-1.2) -- (-0.2,-1.2) node[above, font=\large] {$x$};
\draw[->, thick] (-1.2,-1.2) -- (-1.2,-0.2) node[right, font=\large] {$y$};

\node[solid purp sq] (n0) at (0,0) {};
\node[above, text=mypurple, yshift=3pt] at (n0) {$q^A$};
\node[below right, font=\small, xshift=1pt, yshift=-1pt] at (n0) {0};

\node[open tri] (n1) at (-2,-2) {};
\node[above right, font=\small, xshift=1pt, yshift=1pt] at (n1) {1};

\node[open red diam] (n2) at (0,-2) {};
\node[above, font=\small, yshift=3pt] at (n2) {2};

\node[open tri] (n3) at (2,-2) {};
\node[above left, font=\small, xshift=-1pt, yshift=1pt] at (n3) {3};

\node[solid cyan diam] (n4) at (2,0) {};
\node[left, font=\small, xshift=-3pt] at (n4) {4};
\node[right, text=mycyan, xshift=3pt] at (n4) {$q^{E_V}$};

\node[solid tri] (n5) at (2,2) {};
\node[below left, font=\small, xshift=-1pt, yshift=-1pt] at (n5) {5};
\node[above left, text=myyellow, xshift=-1pt, yshift=2pt] at (n5) {$q^N$};

\node[solid red diam] (n6) at (0,2) {};
\node[below, font=\small, yshift=-3pt] at (n6) {6};
\node[above, text=myred, yshift=3pt] at (n6) {$q^{E_H}$};

\node[open tri] (n7) at (-2,2) {};
\node[below right, font=\small, xshift=1pt, yshift=-1pt] at (n7) {7};

\node[open cyan diam] (n8) at (-2,0) {};
\node[right, font=\small, xshift=3pt] at (n8) {8};

\end{tikzpicture}
\caption{The four degrees of freedom $ \boldsymbol{q}^{N}, \, \boldsymbol{q}^{E_{H}}, \, \boldsymbol{q}^{E_{V}}, \, \mbox{and} \, \boldsymbol{q}^{A}$ corresponding to a cell $C_{ij}$ for the Active Flux method in a 2-D Cartesian cell.}
\label{eq:af_dof}
\end{figure}
\subsection{Semi-discrete numerical method}
We are interested in a semi-discrete formulation of Active Flux, originally proposed in \cite{abgrall2023b, abgrall_barsukow2023}. The main idea is to derive semi-discrete evolution formulas for the cell-average using the conservative formulation \eqref{eq:coriolis} and the point values using the quasilinear formulation \eqref{eq:quasilinear}, resulting in a system of ordinary differential equations for each of the degrees of freedom. These are then evolved in time using a suitable Strong Stability Preserving (SSP) Runge Kutta time integrator \cite{shu1988, gottlieb1998otalVD}. 
\subsubsection{Evolution of cell-averages}
Integrating the system \eqref{eq:coriolis} in the cell $C_{ij}$ yields the semi-discrete formula 
\begin{align}\label{eq:af_cellav_evo}
    \frac{d}{dt} \bar{\boldsymbol{q}}_{i,j}(t) &= {-\frac{1}{\Delta x} \left( \boldsymbol{f}^x_{i+\frac{1}{2},j} - \boldsymbol{f}^x_{i-\frac{1}{2},j} \right) -\frac{1}{\Delta y} \left( \boldsymbol{f}^y_{i,j+\frac{1}{2}} - \boldsymbol{f}^y_{i,j+\frac{1}{2}} \right)} + {\bar{\boldsymbol{s}}_{ij}}, 
    \end{align}
for the evolution of the cell-average values $\displaystyle \bar{\boldsymbol{q}}_{ij} :=\frac{1}{\Delta x \Delta y} \int_{y_{\jmh}}^{y_{\jph}} \int_{x_{\imh}}^{x_{\iph}} \boldsymbol{q}(x,y,t) \dif x \dif y,$ where $\displaystyle \bar{\boldsymbol{s}}_{ij} := \frac{1}{\Delta x \Delta y} \int_{y_{\jmh}}^{y_{\jph}} \int_{x_{\imh}}^{x_{\iph}} \boldsymbol{s}(\boldsymbol{q}(x,y,t)) \dif x \dif y$ and \begin{align*}
    \displaystyle \boldsymbol{f}^x_{i+\frac{1}{2},j}:= \frac{1}{6}\boldsymbol{f}^x(q_{ij}^N) + \frac{4}{6}\boldsymbol{f}^x(q_{ij}^{E_V}) +  \frac{1}{6}\boldsymbol{f}^x(q_{i,j-1}^N) \approx \int_{y_{\jmh}}^{y_{\jph}}\boldsymbol{f}(q(x_{\iph}, y)) \dif y,\\
    \displaystyle \boldsymbol{f}^y_{i,j+\frac{1}{2}}:= \frac{1}{6}\boldsymbol{f}^y(q_{i-1,j}^N) + \frac{4}{6}\boldsymbol{f}^y(q_{ij}^{E_H}) +  \frac{1}{6}\boldsymbol{f}^y(q_{i,j}^N) \approx \int_{x_{\imh}}^{x_{\iph}}\boldsymbol{f}(q(x, y_{\jph})) \dif x, \end{align*} are the approximations to the flux integrals on the interfaces computed using the Simpson's quadrature rule.  \textcolor{black}{We note that for source terms $s(\boldsymbol{q})$ considered in this article, which are linear functions of $\boldsymbol{q}$ ($s((u, v, p)^{T}) = (cv, -cu, 0)^{T}$ for \eqref{eq:coriolis} and $s((h, hu, hv)^{T}) = (0, \frac{1}{R_0}hv, -\frac{1}{R_0}hu)^{T}$ in \eqref{eq:swe_noncons}) the source term cell-average  $\displaystyle \bar{s}_{ij}$ can be computed easily as linear combinations of $\bar{\boldsymbol{q}}_{ij}.$} Unlike usual finite volume methods, owing to the continuity of the solution reconstruction \eqref{eq:global_recon} across the cell interfaces, no Riemann solver is required in the cell-average evolution formula \eqref{eq:af_cellav_evo}.
    
\subsection{Evolution of point values}
Next, for the update of the point values $\boldsymbol{q}_{i,j}^P$ where $P \in \{E_H, N, E_V\},$ fixing $(x,y) = P,$ from the quasilinear formulation \eqref{eq:quasilinear}, we can write 
\begin{align}\label{eq:quas_pt}
    \frac{d}{dt} \boldsymbol{q}^{P} = -{J}_x(\boldsymbol{q}^{P})  \frac{\partial}{\partial x} \boldsymbol{q}^{P} - {J}_y(\boldsymbol{q}^{P})  \frac{\partial}{\partial y} \boldsymbol{q}^{P} + S(\boldsymbol{q}^{P}).
\end{align} 
{To approximate the partial derivatives $\frac{\partial}{\partial x} \boldsymbol{q}^{P},\frac{\partial}{\partial y} \boldsymbol{q}^{P}$ appearing in \eqref{eq:quas_pt}, we consider the reconstruction $\tilde{\boldsymbol{q}} $ defined in \eqref{eq:global_recon}.}
The property that the left and right traces of the derivatives of  $\tilde{\boldsymbol{q}} $ need not be equal, can  be used to induce upwinding in the formula \eqref{eq:quas_pt}. This can be achieved (also see \cite{abgrall_barsukow2023, Barsukow2025Fourier}) by splitting the positive and negative components of the Jacobian matrices  $J_x = \partial \boldsymbol{f}^x/\partial \boldsymbol{q}$ and $J_y= \partial \boldsymbol{f}^y/\partial \boldsymbol{q},$ to act on the left and right traces of the reconstruction \eqref{eq:global_recon},  yielding a semi-discrete point value evolution formula: 
\begin{align}\label{eq:af_ptvalue}
    \frac{d}{dt} \boldsymbol{q}^P &= - J_x^{+}(\boldsymbol{q}^P)      D_x^{+} 
          (\boldsymbol{q}^P)
     -  J_x^{-}(\boldsymbol{q}^P)     D_x^-(\boldsymbol{q}^P) \\& \spc - J_y^{+} (\boldsymbol{q}^P)D_y^+(\boldsymbol{q}^P) -  J_y^{-}(\boldsymbol{q}^P)D_y^-(\boldsymbol{q}^P)  + \boldsymbol{s}(\boldsymbol{q}^{P}), \nonumber
\end{align} with the splitting $J_x  = J_x^{+} + J_x^{-}, J_y  = J_y^{+} + J_y^{-}$ where $J_x^{\pm}$ and $J_y^{\pm}$ are the positive and negative components $J_x$ and $J_y$ respectively, and $D_x^{\pm}\boldsymbol{q}^P$     and $D_y^{\pm}\boldsymbol{q}^P$ are the traces of the  $x$ and $y$ derivatives of the reconstruction \eqref{eq:global_recon} respectively, at the point $P,$ see \cite{Barsukow2025Fourier,
barsukow_lisa_2026} for details.
\par
Using the diagonalization of the Jacobian matrices $$J_x = R_{x} \mathrm{diag}(\lambda_1^x, \dots, \lambda_m^x) R^{-1}_{x}, \quad J_y = R_{y} \mathrm{diag}(\lambda_1^y, \dots, \lambda_m^y) R^{-1}_{y},$$ we compute the positive and negative matrices $J_x^{\pm}, J_y^{\pm}$ in \eqref{eq:af_ptvalue} as in \cite{Barsukow2025Fourier} as follows: 
    \begin{align}\label{eq:upwind_js}
        J_x^{\pm} &=  R_{x} \mathrm{diag}\left((\lambda_1^{x})^{\pm}, \dots, (\lambda_m^x)^{\pm}\right) R^{-1}_{x}, \,\,  J_y^{\pm} =  R_{y} \mathrm{diag}\left((\lambda_1^{y})^{\pm}, \dots, (\lambda_m^y)^{\pm}\right) R^{-1}_{y},
    \end{align} where $a^+ := \max\{0, a\}$ and $a^- = \min\{0, a\}$ for $a \in \mathbb{R}.$ 
    \par 
Now, substituting the Jacobian matrices \eqref{eq:jacob_lin} corresponding to the system \eqref{eq:coriolis} in \eqref{eq:af_ptvalue}, we obtain the following semi-discrete evolution formula for the point values:
\begin{align}\label{eq:ptupdate}
    \frac{d}{dt} \begin{pmatrix}
         u \\ v \\ p
    \end{pmatrix}_{(x,y) = P} &  = {\frac{1}{2}\begin{pmatrix}
        -(D_x^+ + D_{x}^{-})p \\
        -(D_y^+ + D_{y}^{-})p \\
        -(D_x^{+}+ D_x^{-} )u - (D_{y}^{+}+ D_{y}^{-})v 
    \end{pmatrix}_{(x,y) = P}} + {\begin{pmatrix}
    cv \\ -cu \\0 
    \end{pmatrix}_{(x,y) = P}}  \\ & \spc + \underbrace{{\frac{1}{2}\begin{pmatrix}
        -(D_x^+ - D_{x}^{-})u \\
        -(D_y^+ - D_{y}^{-})v \\
        -(D_x^{+}- D_x^{-} )p - (D_{y}^{+}- D_{y}^{-})p 
    \end{pmatrix}_{(x,y) = P}}}_{\mathscr D_P}. \nonumber
\end{align}  
\begin{remark}\label{remark:central_af}(Central Active Flux method)
    In the semi-discrete point value evolution formula \eqref{eq:ptupdate}, the term 
    $\mathscr D_P$ acts as numerical diffusion. Without $\mathscr D_P$, the semi-discrete formula \eqref{eq:ptupdate}  together with \eqref{eq:af_cellav_evo} constitutes the \emph{central Active Flux} method (see \cite{Barsukow2025Fourier, BarsukowBIT2026}). 
\end{remark}

\begin{remark}
    An alternative to the Jacobian splitting given in \eqref{eq:upwind_js} is to consider $\displaystyle a_x := \max_{i \in \{1,2, \dots, m\}}\{\abs{\lambda(J_x)}\}, \, a_y :=  \max_{i \in \{1,2, \dots, m\}}\{\abs{\lambda(J_y)}\},$ using which we can define the \emph{Rusanov splitting} (see \cite{Barsukow2025Fourier}): 
    \begin{align}\label{eq:rusanov}
        J_x^{\pm} = \frac{1}{2} \left( J_x \pm a_x \mathbb{1}_m \right), \quad J_y^{\pm} = \frac{1}{2} \left(J_y \pm a_y \mathbb{1}_m\right),
    \end{align} where $\mathbb{1}_m$ is the $m \times m$ identity matrix. However, as was in the case for homogenous linear acoustics system in \cite{barsukow2025b}, this splitting yields an Active Flux method which, as we will observe in Section \ref{sec:numericalexp}, is not stationarity preserving.
\end{remark}


\subsection{Discretization in time} The system \eqref{eq:af_cellav_evo}, \eqref{eq:af_ptvalue} of ordinary differential equations can be evolved in time using the strong stability preserving Runge Kutta (SSP-RK3) third-order time integration \cite{shu1988, gottlieb1998otalVD, gottliebsigalshu2001}. The time-step $\Delta t$ needs to satisfy a Courant-Friedrichs-Lewy (CFL) condition\begin{equation}\label{eq:cfl}
    \Delta t \leq C_\text{CFL} \, \frac{\min\{\Delta x, \Delta y\}}{\lambda_{\max}},
\end{equation} where $\lambda_{\max}$ represents the maximum spectral radius of the flux Jacobians  $ \partial \boldsymbol{f}^x/\partial \boldsymbol{q}$ and $\partial \boldsymbol{f}^y/\partial \boldsymbol{q}.$ The analysis in Section \ref{sec:fourier} reveals that $C_\text{CFL} \leq 0.27$ for the fully-discrete method to remain stable in two-spatial dimensions. 
\section{Stationarity preservation} \label{sec:fourier}

Fourier analysis of numerical methods for linear PDE  systems \textcolor{black}{on Cartesian grids} is used to ensure that under a suitable time-step restriction, the modes of the numerical solution are non-increasing and remain stable. Further, Fourier modes that remain stationary under the application of the numerical method can be used to characterize its discrete stationary states. \textcolor{black}{The stationarity preservation of linear numerical methods on Cartesian grids have been analyzed using the discrete Fourier transform in  \cite{barsukow2019, barsukow2023, Barsukow2025Fourier, Barsukow2025Stationarity}.} Often, most numerical methods are only capable of preserving  trivial stationary states (refer to Section \ref{subsec:stationary} for the definition). A numerical method is said to be \emph{stationarity preserving} for a system of PDEs, if it discretizes all the analytical stationary states of the PDE (\cite{barsukow2019}). From the discussion in Section \ref{subsec:stationary}, for the system \eqref{eq:coriolis} this means that a stationarity preserving numerical method should admit some discretization of the geostrophic equilibrium \eqref{eq:geo_balance} which is kept stationary by the method.
\subsection{Stationarity preservation of semi-discrete Active Flux using the discrete Fourier transform}
  We apply a discrete spatial Fourier transform to the semi-discrete Active Flux formulas \eqref{eq:af_cellav_evo}, \eqref{eq:af_ptvalue}. Consider a discrete grid function $q_{i,j}^X(t)$ representing a degree of freedom of type $X \in \{A, E_H, E_V, N\}$ in cell $(i,j)$ with grid spacings $\Delta x$ and $\Delta y$. 
 This spatial data can be expressed as a superposition of discrete Fourier modes, as follows
\begin{equation}
    q_{i,j}^X(t) = \sum_{\boldsymbol{k}} \hat{q}^X(t, \boldsymbol{k}) \exp(\mathbb{i} (k_x i\Delta x + k_y j\Delta y)), \quad X \in \{A, E_H, E_V, N\},
\end{equation}
where $\boldsymbol{k} = (k_x, k_y)$ is the spatial frequency (wave number vector) and the coefficient $\hat{q}^X(t,\boldsymbol{k}) \in \mathbb{R}^m$ denotes the \emph{discrete Fourier transform} of the  function $q^X$. To simplify the notation, we define the {translation factors}: $
    t_x = \exp(\mathbb{i} k_x \Delta x),   t_y = \exp(\mathbb{i} k_y \Delta y).$ Now, any spatial shift in the grid stencil corresponds simply to multiplying the Fourier coefficient $\hat{q}^X$ by the appropriate powers of $t_x$ and $t_y,$ i.e., 
\begin{equation}
    q_{i+k,j+l}^X(t) = \sum_{\boldsymbol{k}} \left(t_x^k t_y^l\hat{q}^X(t, \boldsymbol{k}) \right) \exp(\mathbb{i} (k_x i\Delta x + k_y j\Delta y)),
\end{equation}
see \cite{Barsukow2025Fourier}, Section 4 for more details. Since \eqref{eq:coriolis} consists of three variables $(u, v, p)$, and in the Active Flux method \eqref{eq:af_cellav_evo}, \eqref{eq:af_ptvalue} we have four types of spatial degrees of freedom corresponding to each cell (one cell average, one horizontal edge, one vertical edge, and one node), the full discrete state of a cell is described by $12$ degrees of freedom.  When we substitute this Fourier ansatz into the semi-discrete evolution formulas  \eqref{eq:af_cellav_evo}, \eqref{eq:af_ptvalue}, the global exponential factors drop out of the equations, yielding a linear system of ordinary differential equations
\begin{equation}\label{eq:evolutionmatrix}
    \frac{d}{dt} \boldsymbol{\hat{q}} + \mathcal{E}(t_x, t_y)\boldsymbol{\hat{q}} = 0,
\end{equation} where $\displaystyle  \boldsymbol{\hat{q}} = (\hat{u}^{A}, \hat{v}^{A}, \hat{p}^{A}, \hat{u}^{E_H}, \hat{v}^{E_H}, \hat{p}^{E_H}, \hat{u}^{E_V}, \hat{v}^{E_V}, \hat{p}^{E_V}, \hat{u}^{N}, \hat{v}^{N}, \hat{p}^{N})^T,$ with a $12 \times 12$ complex-valued dense \emph{evolution matrix} $\mathcal{E}(t_x,t_y)$. For simplicity, we denote the matrix by $\mathcal{E}$ and represent it in the form
\begin{equation}\label{eq:evolutionmatrix_appx}
\mathcal{E} = 
\begin{bmatrix}
\mathcal{E}_{AA} & \mathcal{E}_{A E_H} & \mathcal{E}_{A E_V} & \mathcal{E}_{AN} \\
\mathcal{E}_{E_H A} & \mathcal{E}_{E_H E_H} & \mathcal{E}_{E_H E_V} & \mathcal{E}_{E_H N} \\
\mathcal{E}_{E_V A} & \mathcal{E}_{E_V E_H} & \mathcal{E}_{E_V E_V} & \mathcal{E}_{E_V N} \\
\mathcal{E}_{NA} & \mathcal{E}_{N E_H} & \mathcal{E}_{N E_V} & \mathcal{E}_{NN},
\end{bmatrix},
\end{equation}
where each submatrix $\mathcal{E}_{X_1 X_2}$ denotes the effect of $\hat{q}^{X_2}$ on the evolution of $\hat{q}^{X_1}$ for $X_{1}, X_2 \in \{A, E_H, E_V, N\}.$ The submatrices can be obtained trivially from \eqref{eq:af_cellav_evo}-\eqref{eq:af_ptvalue} (see for eg., Appendix C in  \cite{Barsukow2025Fourier}).  Note that \eqref{eq:evolutionmatrix} can be seen as a discrete analogue of the expression \eqref{eq:fourier_analytic_sub}, with matrix $\mathcal{E}$ being the discrete counterpart of $\mathbb{i}\boldsymbol{J}  \cdot \boldsymbol{k}  - S.$ 
From \eqref{eq:evolutionmatrix}, it is immediate that the kernel of the matrix $\mathcal{E}$ corresponds to the Fourier transform of the discrete datum that is kept stationary by the numerical method. For the method to be stationarity preserving, the kernel of \eqref{eq:evolutionmatrix_appx} has to be non-trivial for any choice of $t_x, t_y$ (i.e., of $\boldsymbol{k}$) \cite{barsukow2019, Barsukow2025Fourier}. Due to the excessive length of computations, the kernel of the matrix $\mathcal{E}$ is evaluated using MATHEMATICA and is found to be non-trivial, as described in the following theorem.

\begin{theorem}\label{thm:stationarity_af}
    The Active Flux evolution matrix $\mathcal{E}$ in \eqref{eq:evolutionmatrix} satisfies $\mathrm{det}(\mathcal{E}) = 0$ for all wave numbers $\boldsymbol{k} \in \mathbb{R}^2.$ The kernel of $\mathcal{E}$ is the one-dimensional space  spanned by the vector 
\begin{equation}\label{eq:af_kernel}
\resizebox{\linewidth}{!}{%
$\begin{aligned}
 \Bigg(
 &\frac{(1+4t_x+t_x^2)(1-t_y)}{3 c \, \Delta y \, t_x(1+t_x)t_y}, && \frac{(t_x-1)(1+4t_y+t_y^2)}{3 c \, \Delta x \, t_x \, t_y(1+t_y)}, && \frac{(1+4t_x+t_x^2)(1+4t_y+t_y^2)}{9t_x(1+t_x)t_y(1+t_y)}, \\[2ex]
 &-\frac{(1+6t_x+t_x^2)(t_y-1)}{2 c \, \Delta y \, t_x(1+t_x)(1+t_y)}, && \frac{t_x-1}{c \, \Delta x \, t_x}, && \frac{1+6t_x+t_x^2}{4t_x+4t_x^2}, \\[2ex]
 & \frac{1-t_y}{c \, \Delta y \, t_y}, && \frac{(t_x-1)(1+6t_y+t_y^2)}{2 c \, \Delta x(1+t_x)t_y(1+t_y)}, && \frac{1+6t_y+t_y^2}{4t_y+4t_y^2}, \\[2ex]
 & \frac{2(1-t_y)}{c \, \Delta y + c \, \Delta y \, t_y}, && -\frac{2(1-t_x)}{c \, \Delta x + c \, \Delta x \, t_x}, && \quad \quad 1 
 \Bigg).
\end{aligned}$%
}
\end{equation}
\end{theorem}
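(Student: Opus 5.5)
The plan is to write out the twelve rows of $\mathcal{E}$ explicitly from \eqref{eq:af_cellav_evo} and \eqref{eq:ptupdate}, check directly that the vector \eqref{eq:af_kernel} is annihilated by it, and then show that the kernel has dimension exactly one. Since $\det(\mathcal{E})=0$ follows from the existence of a nonzero kernel vector, the substance is: (i) $\mathcal{E}\boldsymbol{\hat q}=0$ for the stated $\boldsymbol{\hat q}$, and (ii) $\operatorname{rank}\mathcal{E}=11$ for generic $\boldsymbol{k}$.

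For the explicit form of $\mathcal{E}$ I would first compute the one-sided derivative traces $D_x^\pm$, $D_y^\pm$ at each point type $P\in\{E_H,E_V,N\}$. These come from the biparabolic reconstruction expressed in the nine values of a cell. After the Fourier transform, the neighbouring cell's values are the current ones multiplied by $t_x^{\pm1}$ or $t_y^{\pm1}$. Because the reconstruction is a tensor product, each trace factorizes into a one-dimensional parabolic derivative (for example $D_x^{-}$ at $x_{\iph}$ equals $(\bar q - 4 q_{\imh} + 3 q_{\iph})\cdot 2/\Delta x$-type combinations in the appropriate variables) times a one-dimensional evaluation or averaging in the transverse direction. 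The cell-average rows come from Simpson's rule, which gives factors like $(1+4t_y+t_y^{-1})/6$. The source entries are the constant $\pm c$ blocks. Once the entries are written down, $\mathcal{E}$ splits into the "central" part plus $\mathscr D_P$.

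For (i), a structural simplification is to use that the kernel vector should make the central part and the diffusion part vanish separately, as happens for pure acoustics in \cite{Barsukow2025Fourier}. The central part reduces to discrete geostrophic relations $D p = \mp c\,u^\perp$ together with a discrete divergence, which the normalisation $\hat p^N=1$ and the $u$, $v$ entries encode. The diffusion part requires $(D_x^+-D_x^-)$ and $(D_y^+-D_y^-)$ to annihilate the pressure and the relevant velocity components at every point. This is the condition that the reconstruction of $p$ is $C^1$ across interfaces, and it is what fixes the $E_H$, $E_V$ and $A$ entries of $p$ in terms of $\hat p^N$: in one dimension, the jump in the derivative vanishing gives $\hat p^{E}$ proportional to $(1+6t+t^2)/(4t(1+t))$. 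For $u$ only the $x$-jump matters and for $v$ only the $y$-jump matters. Substituting row by row, each identity reduces to a rational-function identity in $t_x,t_y$; clearing denominators gives polynomial identities, which I would verify symbolically (as the paper does with MATHEMATICA).

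For (ii), I would exhibit an $11\times11$ minor whose determinant is a nonzero polynomial in $t_x,t_y$, so that the rank is $11$ except on a measure-zero set of wave numbers. The main obstacle is the exceptional $\boldsymbol{k}$, i.e. $t_x$ or $t_y\in\{-1\}$ and $t=1$, where denominators in \eqref{eq:af_kernel} vanish and the kernel could jump in dimension or the vector degenerate. I would handle these separately: either rescale the vector by clearing denominators and check that a nonzero limit persists, or state that the theorem holds for $\boldsymbol{k}$ away from these points. Finding a suitable minor with a factorable determinant is where I expect the heavy computer algebra to be unavoidable.
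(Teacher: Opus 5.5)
Your plan is essentially the paper's own argument: the paper only assembles $\mathcal{E}$ from the blocks in Appendix~\ref{appx:submatrices} and computes its kernel symbolically in MATHEMATICA, and your observation that the central and diffusive parts vanish separately on \eqref{eq:af_kernel} is exactly Remark~\ref{remark:central_af_diff}. Two small corrections: the one-sided trace at $x_{\iph}$ from cell $i$ is $\frac{2}{\Delta x}(q_{\imh}+2q_{\iph}-3\bar q_i)$, not the combination you wrote; and a single nonvanishing $11\times 11$ minor gives rank $11$ only off that minor's own zero set, so proving one-dimensionality for every $\boldsymbol{k}$ requires that the $11\times 11$ minors have no common zero on $|t_x|=|t_y|=1$ --- this is a separate issue from the poles of \eqref{eq:af_kernel} at $t_x=-1$ or $t_y=-1$, which only need rescaling (and rescaling settles $\det\mathcal{E}=0$ there).
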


\begin{corollary}
   The semi-discrete Active Flux method \eqref{eq:af_cellav_evo}, \eqref{eq:af_ptvalue} is stationarity preserving for the two-dimensional linear acoustics system with Coriolis source terms \eqref{eq:coriolis}. The discrete stationary state of the method is characterized by the following relations
\begin{align}
    &c \frac{u^A_{i, j-1} + 4u^A_{i, j} + u^A_{i, j+1}}{6} + \frac{p^A_{i, j+1} - p^A_{i, j-1}}{2\Delta y} = 0, \label{eq:res1}\\[6pt]
    &c \frac{v^A_{i-1, j} + 4v^A_{i, j} + v^A_{i+1, j}}{6} - \frac{p^A_{i+1, j} - p^A_{i-1, j}}{2\Delta x} = 0, \label{eq:res2}\\[6pt]
    &c v^{E_H}_{i, j} - \frac{p^N_{i, j} - p^N_{i-1, j}}{\Delta x} = 0, \quad
    c u^{E_V}_{i, j} + \frac{p^N_{i, j} - p^N_{i, j-1}}{\Delta y} = 0, \label{eq:res4}\\[6pt]
    &c \frac{u^N_{i, j+1} + u^N_{i, j}}{2} + \frac{p^N_{i, j+1} - p^N_{i, j}}{\Delta y} = 0, \quad
    c \frac{v^N_{i+1, j} + v^N_{i, j}}{2} - \frac{p^N_{i+1, j} - p^N_{i, j}}{\Delta x} = 0, \label{eq:res6}\\[6pt]
    &c \frac{u^{E_H}_{i, j+1} + u^{E_H}_{i, j}}{2} + \frac{p^{E_H}_{i, j+1} - p^{E_H}_{i, j}}{\Delta y} = 0, \quad c \frac{v^{E_V}_{i+1, j} + v^{E_V}_{i, j}}{2} - \frac{p^{E_V}_{i+1, j} - p^{E_V}_{i, j}}{\Delta x} = 0,\label{eq:res8}\\[6pt]
    & 4 (p^{E_V}_{i,j} + p^{E_V}_{i,j+1}) - p^{N}_{i, j-1}- 6 p^{N}_{i, j} - p^{N}_{i, j+1} = 0, \label{eq:res9}\\[6pt]
    & 4 (p^{E_H}_{i,j} + p^{E_H}_{i+1,j}) - p^{N}_{i-1, j}- 6 p^{N}_{i, j} - p^{N}_{i+1, j} = 0, \label{eq:res10}\\[6pt]
    & 9(p^{A}_{i,j} + p^{A}_{i+1,j} + p^{A}_{i,j+1}+ p^{A}_{i+1, j+1}) - p^{N}_{i-1,j-1} - 4p^{N}_{i-1,j} - p^{N}_{i-1,j+1}  \label{eq:res11} \\  & \spc -4p^{N}_{i,j-1}  - 16 p^{N}_{i,j}  - 4 p^{N}_{i,j+1} - p^{N}_{i+1,j-1} -4p^{N}_{i+1,j} - p^{N}_{i+1,j+1} = 0.  \no
\end{align}  
\end{corollary}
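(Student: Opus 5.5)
The statement to prove is the Corollary: stationarity preservation plus the discrete relations \eqref{eq:res1}--\eqref{eq:res11}. I would derive it directly from Theorem~\ref{thm:stationarity_af}.

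First, the Fourier transform of \eqref{eq:evolutionmatrix} shows that a discrete datum is stationary exactly when, for every wave number $\boldsymbol{k}$, its Fourier coefficient $\hat{\boldsymbol{q}}(\boldsymbol{k})$ lies in $\ker\mathcal{E}(t_x,t_y)$. By Theorem~\ref{thm:stationarity_af}, this kernel is one-dimensional for every $\boldsymbol{k}$. The free scalar amplitude is the normalised entry $\hat p^N(\boldsymbol{k})=1$. Thus exactly one grid function, say $p^N$, can be chosen freely as a function of space, and every other component is determined from it. This mirrors the continuous situation of Section~\ref{subsec:stationary}, where the kernel of $\mathbb{i}\boldsymbol{J}\cdot\boldsymbol{k}-S$ is one-dimensional for all $\boldsymbol{k}$. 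Hence the stationary states are non-trivial in the sense of \cite{barsukow2019}, and the method is stationarity preserving.

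Second, to obtain the explicit relations I would convert ratios of kernel components back into shift operators. For instance, the ratio $\hat v^{E_H}/\hat p^N=(t_x-1)/(c\,\Delta x\,t_x)$ gives $c\,t_x\hat v^{E_H}=(t_x-1)\hat p^N/\Delta x$. After dividing by $t_x$ and inverse transforming, this is the first relation in \eqref{eq:res4}. Likewise, $\hat u^{E_V}/\hat p^N=(1-t_y)/(c\,\Delta y\,t_y)$ yields the second relation.

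The nodal ratios $\hat u^N/\hat p^N=2(1-t_y)/(c\Delta y(1+t_y))$ and its analogue for $\hat v^N$ give \eqref{eq:res6}. For \eqref{eq:res8}, I would form $\hat u^{E_H}/\hat p^{E_H}$. This ratio equals $-2(t_y-1)/(c\Delta y(1+t_y))$, because the factor $(1+6t_x+t_x^2)/(t_x(1+t_x))$ cancels. It then yields the averaged relation, and $\hat v^{E_V}/\hat p^{E_V}$ is handled symmetrically.

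The pressure relations come from the normalisation. The identity $\hat p^{E_V}=(1+6t_y+t_y^2)/(4t_y(1+t_y))\,\hat p^N$ rearranges to $4(1+t_y)\hat p^{E_V}=(t_y^{-1}+6+t_y)\hat p^N$, which is \eqref{eq:res9} after index shifts. Relation \eqref{eq:res10} follows the same way. For \eqref{eq:res11}, the $\hat p^A$ entry gives $9(1+t_x)(1+t_y)\hat p^A=(t_x^{-1}+4+t_x)(t_y^{-1}+4+t_y)\hat p^N$, which expands to the nine-point stencil.

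Relations \eqref{eq:res1}--\eqref{eq:res2} come from $\hat u^A/\hat p^A$ and $\hat v^A/\hat p^A$. For example, $\hat u^A/\hat p^A=3(1-t_y)(1+t_y)/(c\Delta y(1+4t_y+t_y^2))$. Multiplying by $t_y^{-1}$ gives $c\,(t_y^{-1}+4+t_y)\hat u^A/6=-(t_y-t_y^{-1})\hat p^A/(2\Delta y)$, which is \eqref{eq:res1}.

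The main obstacle is bookkeeping. One must choose, for each relation, the correct power of $t_x,t_y$ to multiply by so that the stencil is centred as stated, and keep sign conventions consistent, e.g.\ $-(t_y-1)=(1-t_y)$. There is also a subtlety at wave numbers where denominators such as $1+t_x$ vanish, i.e.\ $k_x\Delta x=\pi$. I would treat these by noting that the stated relations are polynomial after clearing denominators, so they hold by continuity/density of the remaining $\boldsymbol{k}$. Alternatively I would check these wave numbers separately. Finally, I would remark that \eqref{eq:res4}--\eqref{eq:res6} are discretisations of \eqref{eq:geo_balance}, so the preserved state is a consistent discretisation of the geostrophic equilibrium.
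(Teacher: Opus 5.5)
Your proposal is correct and follows the paper's route: take the kernel vector from Theorem~\ref{thm:stationarity_af}, form ratios such as $\hat u^A/\hat p^A$, clear denominators with suitable powers of $t_x,t_y$, and inverse-transform. The paper writes this out only for \eqref{eq:res1} and says the rest follow ``similarly''; your ratios for \eqref{eq:res2}--\eqref{eq:res11} all check out.

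Your treatment of the wave numbers with $t_x=-1$ or $t_y=-1$ goes beyond the paper. If you keep it, run the continuity argument on the rescaled kernel vector $(1+t_x)(1+t_y)$ times \eqref{eq:af_kernel}, together with the one-dimensionality of $\ker\mathcal{E}$, rather than on $\hat{\boldsymbol q}$ itself. That rescaled vector is continuous on the torus and nowhere zero, since its $\hat p^A$-entry is $(4+2\cos k_x\Delta x)(4+2\cos k_y\Delta y)/9$. Note also that at those wave numbers $\hat p^N$ is forced to vanish, so the free amplitude sits in $\hat p^A$, not in $\hat p^N$.
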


\begin{proof} 
Let $\hat{\boldsymbol{q}}$ be the discrete Fourier transform of a discrete stationary solution $\{q^{X}_{ij}\}_{i,j \in \mathbb{Z}}$. Its Fourier transform $\hat{\boldsymbol{q}}$ is proportional to the basis vector given in \eqref{eq:af_kernel}, by Theorem \ref{thm:stationarity_af}. 
The ratio of the first and third components of the vector in \eqref{eq:af_kernel} is
\begin{equation*}
    \frac{\hat{u}^A}{\hat{p}^A}  = \frac{-\frac{(1+4t_x+t_x^2)(t_y-1)}{3c\Delta y t_x (1+t_x)t_y}}{\frac{(1+4t_x+t_x^2)(1+4t_y+t_y^2)}{9 t_x(1+t_x)t_y(1+t_y)}}= -\frac{3(t_y^2-1)}{c\Delta y (1+4t_y+t_y^2)}.
\end{equation*}
Cross-multiplying and dividing by $6 t_y \Delta y$ yields
\begin{equation}
    c \hat{u}^A \left(\frac{t_y^{-1} + 4 + t_y}{6}\right) + \hat{p}^A \left(\frac{t_y - t_y^{-1}}{2\Delta y}\right) = 0.
\end{equation}
Now, applying the inverse discrete Fourier transform yields \eqref{eq:res1}, which is a discretization of $\displaystyle cu+\frac{\partial}{\partial y} p = 0,$ in \eqref{eq:geo_balance}. The remaining formulas can be derived similarly.

\end{proof}

\begin{remark}\label{remark:central_af_diff} For the central Active Flux method described in Remark \ref{remark:central_af}, a similar  Fourier analysis as above yields that the kernel of the  matrix is one-dimensional and is the same as that of the upwind Active Flux \eqref{eq:af_cellav_evo}, \eqref{eq:ptupdate}  as given by \eqref{eq:af_kernel}. This implies that the numerical diffusion in the point value evolution formula \eqref{eq:af_ptvalue} vanishes on the kernel \eqref{eq:af_kernel} of the upwind/central Active Flux matrix. 
    
\end{remark}

\subsection{Dispersion analysis}
The presence of spurious oscillatory modes has been studied for the system \eqref{eq:coriolis} in \cite{bernard2008, leroux2012}. In this section, we analyze the dispersion behaviour of the Active Flux method. Similar to the approach in \cite{bernard2008}, this can be done by analyzing the imaginary parts of the eigenvalues of the matrix $\mathcal{E}$ in the semi-discrete evolution formula \eqref{eq:evolutionmatrix}.

For a given wave number vector $\boldsymbol{k} = (k_x, k_y)$, consider the 12 complex eigenvalues $\lambda_j(\boldsymbol{k})$ of the matrix $\mathcal{E}$ given in  \eqref{eq:evolutionmatrix}:
\begin{equation}
    \lambda_j(\boldsymbol{k}) = \sigma_j(\boldsymbol{k}) + \mathbb{i}\omega_j(\boldsymbol{k}), \quad j \in \{1, 2, \dots, 12\}.
\end{equation}
Here, the imaginary part $\omega_j(\boldsymbol{k}) = \text{Im}(\lambda_j)$ represents the discrete wave frequencies supported by the method (numerical dispersion), and the real part $\sigma_j(\boldsymbol{k}) = \text{Re}(\lambda_j)$ represents the amplitude decay of the corresponding mode (numerical dissipation). These numerical modes are compared against the exact analytical frequencies (refer to the derivation in Section 2.3 of \cite{bernard2008}) given by $\omega_{\text{exact}} \in \{0, \pm\sqrt{k_x^2 + k_y^2 + c^2}\}$. We assume that $k_x = k_y = k$ and set $\Delta x = \Delta y = 1.$

\par

The imaginary parts of the 12 eigenvalues of the Active Flux evolution matrix $\mathcal{E}$ are displayed in Figure \ref{fig:von_neumann_analysis}(a). \textcolor{black}{Comparing with the 3 known analytical modes (also displayed in Figure \ref{fig:von_neumann_analysis}(a)), 3 out of the 12 modes can be identified as physical.  In Figure \ref{fig:von_neumann_analysis}(b), the absolute dispersion errors of the 3 physical Active Flux modes in comparison with the exact analytical modes are displayed.  Since the continuous physical equations are non-dissipative ($\sigma_{\text{exact}} = 0$), the absolute dissipation error is the magnitude of the real part of the eigenvalues, $|\sigma_j(k)|$. In Figure \ref{fig:von_neumann_analysis}(c) the absolute dissipation errors of the 12 Active Flux modes are shown. The physical modes show an error convergence rate of order $4$ for both the dispersion and dissipation errors  in Figure \ref{fig:von_neumann_analysis}(b) and  Figure \ref{fig:von_neumann_analysis}(c), respectively. Further, Figure \ref{fig:von_neumann_analysis}(a) indicates the presence of modes other than the physical ones, but the method assigns correspondingly high numerical diffusion to such modes as seen in Figure \ref{fig:von_neumann_analysis}(c), indicating that they will be damped.}

\begin{figure}[htbp]
    \centering

    \begin{subfigure}[b]{0.75\textwidth}
        \centering
        \includegraphics[width=\textwidth]{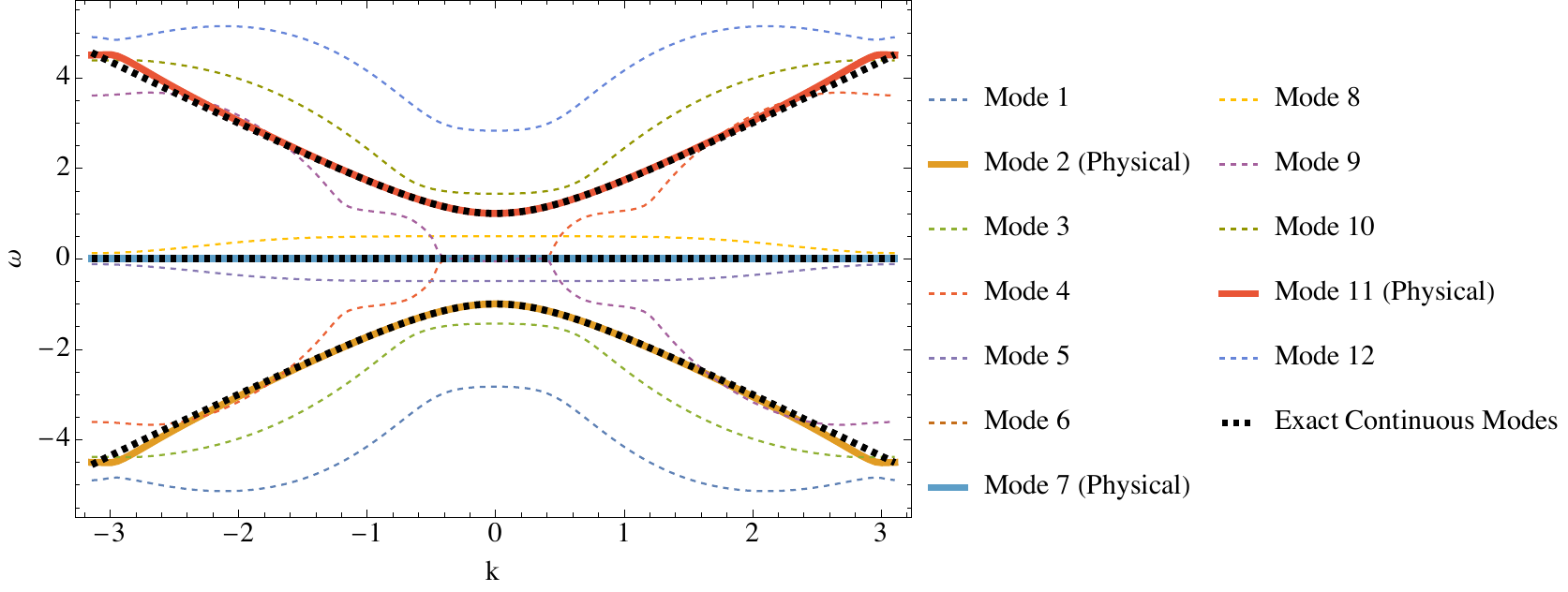}
        \caption{Wave Dispersion}
        \label{fig:vn_dispersion}
    \end{subfigure}

    \vspace{0.5em}

    \begin{subfigure}[b]{0.45\textwidth}
        \centering
        \includegraphics[width=\textwidth]{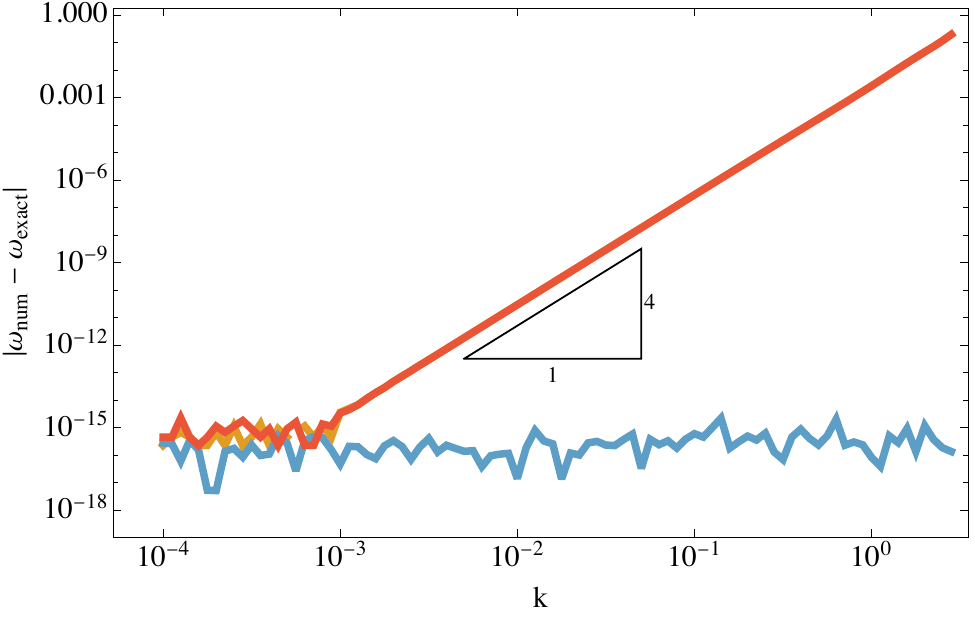}
        \caption{Dispersion Error (physical modes)}
        \label{fig:vn_dispersion_error}
    \end{subfigure}
    \hfill
    \begin{subfigure}[b]{0.45\textwidth}
        \centering
        \includegraphics[width=\textwidth]{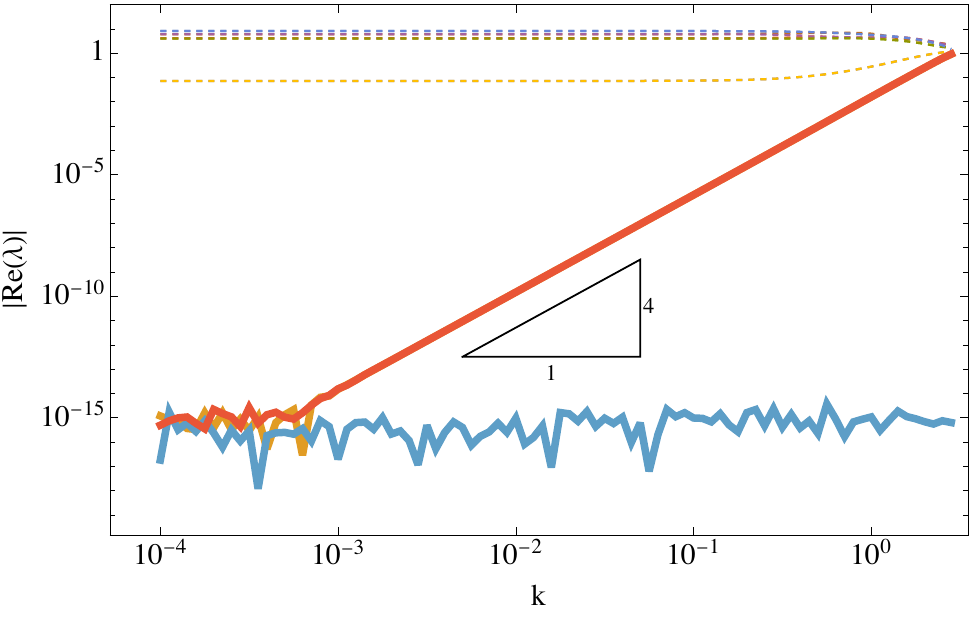}
        \caption{Dissipation Error}
        \label{fig:vn_dissipation}
    \end{subfigure}

    \caption{For each wave number $k = k_x = k_y$, let
    $\lambda_i(k) = \sigma_i(k) + \mathbb{i}\omega_i(k)$,
    $i \in \{1,2,\dots,12\}$, be the corresponding eigenvalues of
    the matrix $\mathcal{E}$ in \eqref{eq:evolutionmatrix} for
    $\Delta x = \Delta y = 1$ and Coriolis parameter $c=1$.
    (a) The numerical dispersion (the imaginary part
    $\omega_i(k)$) plotted against $k$. (b) The absolute dispersion
    error $|\omega_{\mathrm{numerical}}-\omega_{\mathrm{exact}}|$
    of the physical numerical modes with respect to the known exact
    analytical modes. (c) The numerical dissipation error
    $|\operatorname{Re}(\lambda(k))|=|\sigma(k)|$.}
    \label{fig:von_neumann_analysis}
\end{figure}
\subsection{Fully discrete stability analysis}
For an analysis of numerical diffusion, following Section 6 of \cite{Barsukow2025Fourier}, the Fourier transform of the fully discrete Active Flux method is given by
\begin{equation}
    \hat{q}^{n+1} = \mathcal{A}(s, \varphi)\hat{q}^n,
\end{equation}
with the amplification matrix of the third-order Runge-Kutta method given by
\begin{equation}\label{eq:amplific_mat}
    \mathcal{A} = \mathbf{I} - \Delta t \mathcal{E} + \frac{1}{2}\Delta t^2 \mathcal{E}^2 - \frac{1}{6}\Delta t^3 \mathcal{E}^3,
\end{equation} where $\mathcal{E}$ is the matrix in \eqref{eq:evolutionmatrix}.
The vector $\boldsymbol{k} = (k_x, k_y)^T$ is parametrized using $s \in [-\pi, \pi]$ and $\varphi$ as $\boldsymbol{k} = s (\cos \varphi, \sin \varphi)^{T}.$
Figure \ref{fig:fourier_analysis} shows the absolute values of the 12 eigenvalues of $\mathcal{A}$ as a function of the wave number $s$ for different time steps $\Delta t$ and $\varphi$. We set $\Delta x = \Delta y = 1$ and the Coriolis parameter $c = 1$. Von Neumann stability requires that the spectral radius satisfies $\rho(\mathcal{A}) \le 1$. As observed in Figure \ref{fig:fourier_analysis}, a time step up to $\Delta t = 0.275$ remains stable for all wave numbers. However, for time steps $\Delta t \geq 0.29$, the method violates the stability threshold. 

\begin{figure}[htbp]
     \centering
     \includegraphics[width=0.8\textwidth]{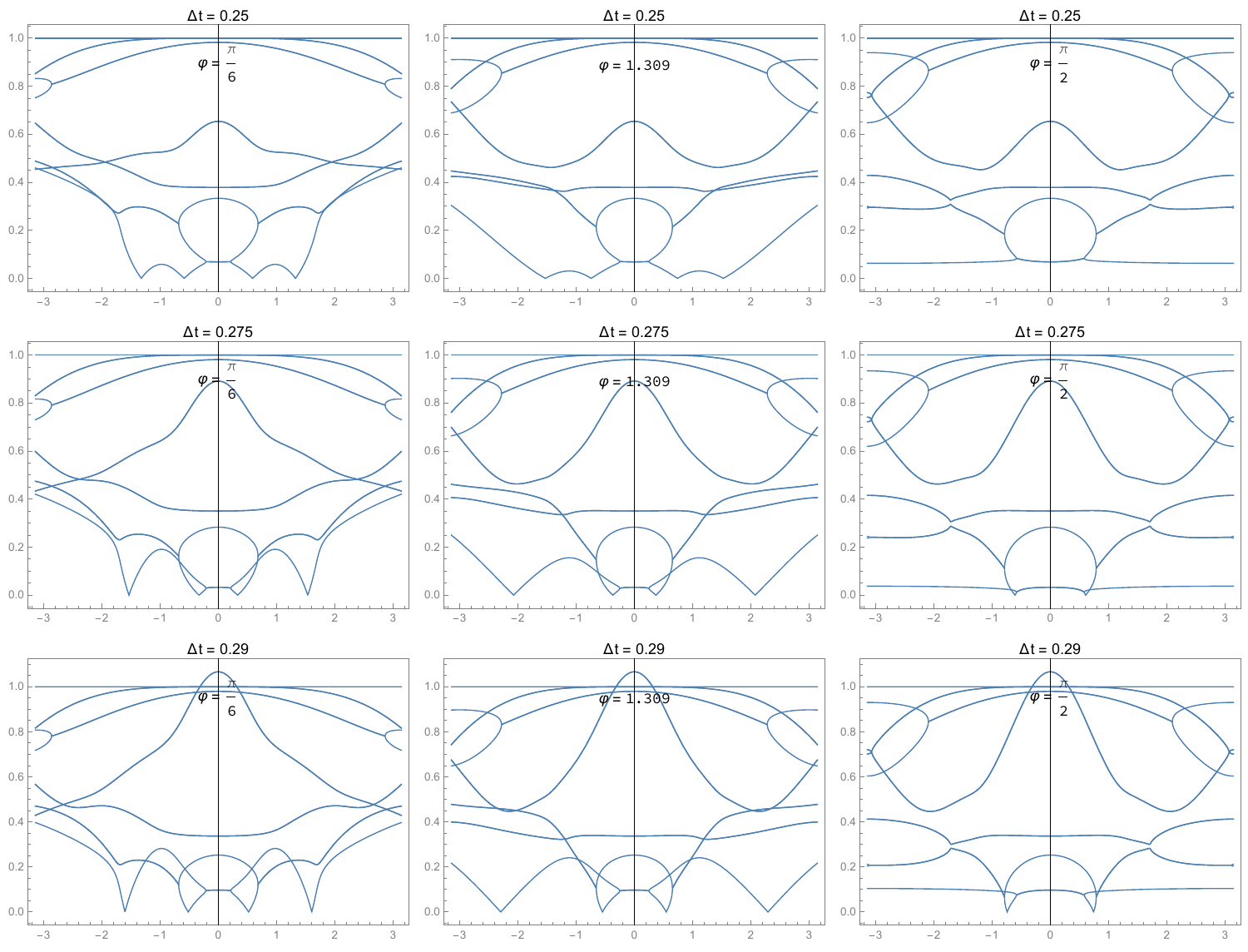}
     \caption{The absolute value of the eigenvalues of the amplification matrix \eqref{eq:amplific_mat} for $\Delta x = \Delta y =1, \, c=1,$  is plotted as a function of $s \in [-\pi,\pi]$ for different time steps $\Delta t$ (rows) and $\varphi$ (columns).}
     \label{fig:fourier_analysis}
\end{figure}

\section{Numerical experiments}\label{sec:numericalexp}
We present a series of numerical test cases to evaluate the performance of the semi-discrete Active Flux method \eqref{eq:af_cellav_evo}, \eqref{eq:af_ptvalue}. Throughout all experiments, $C_{\text{CFL}}$ in \eqref{eq:cfl} is set to $0.27$, in accordance with the Fourier stability analysis of Section \ref{sec:fourier}. Periodic boundary conditions are assumed, unless otherwise stated. In the first part, we assess the performance of the method for the linear system \eqref{eq:coriolis}, thereby also validating the theoretical results in Section \ref{sec:fourier}. In a later part, we examine the application of the method to the case of nonlinear shallow water equations with Coriolis source terms.
\subsection{Linear system}
In this section, we focus on numerical experiments for the linear acoustics system with Coriolis source terms \eqref{eq:coriolis}. The reference speed $a_*$ in \eqref{eq:coriolis} is fixed at $a_* = 1.0$ for all experiments.\\
\begin{example} \label{ex:linvortex} (Stationary geostrophic vortex)
    In this example, from \cite{barsukow2025b}, we consider a stationary geostrophic equilibrium solution to \eqref{eq:coriolis}. The computational domain is chosen to be $[0,1] \times [0,1]$, and the Coriolis parameter in \eqref{eq:coriolis} is set to $c = 0.2$. The initial datum is given by
\begin{align}\label{ic:statvortex_linear}
    \boldsymbol{u}^{0}(x,y) &= -h (\|\boldsymbol{x} - \boldsymbol{x}_0 \|) \begin{pmatrix}
        x - x_0 \\
        y - y_0
    \end{pmatrix}^{\perp}, \\
    p^{0}(x,y) &= 1 - c g(\|\boldsymbol{x} - \boldsymbol{x}_0 \|), \nonumber
\end{align} 
where $\displaystyle g(\rho) := \frac{1}{10} \exp(-100\rho^2)$ and $\displaystyle h(\rho) := 20 \exp(-100\rho^2)$.
The initial datum \eqref{ic:statvortex_linear} (see Figure \ref{fig:combined_stat_vortex}(a)) is evolved  using the semi-discrete Active Flux method \eqref{eq:af_cellav_evo}, \eqref{eq:quas_pt} up to $t=10^3$  using a $20 \times 20$ grid (see Figure \ref{fig:combined_stat_vortex}(b)). Also, a diagonal cross-section (along $x=y$) of the numerical solution for $p$ (cell averages) at times $t=10, 100$ is plotted in Figure \ref{fig:combined_stat_vortex}(c). For comparison, we show the solution obtained by evolving the same initial datum \eqref{ic:statvortex_linear} until $t=100$ using the Active Flux method with Rusanov-type splitting \eqref{eq:rusanov} on a $40 \times 40 $ grid in  Figure \ref{fig:linvor_rus}. The degradation of the numerical solution by $t=100$ can be observed in this case.
\par
The Fourier transform of the initialization of the continuous initial datum \eqref{ic:statvortex_linear} need not strictly lie within the kernel \eqref{eq:af_kernel} of the Active Flux spatial discretization. \textcolor{black}{Instead, during an initial layer,} the numerical solution is observed to converge towards {the discrete steady state} as time evolves as displayed in the $L^1$-error plot in Figure \ref{fig:combined_stat_vortex}(d). Furthermore, the temporal evolution of the discrete geostrophic equilibrium residuals \eqref{eq:res1}-\eqref{eq:res11} which characterize the discrete stationary state for the Active Flux method, plotted in Figure \ref{fig:combined_stat_vortex}(e), confirms that the numerical solution approaches the exact discrete stationary state.  This is in contrast to a different discretization of the geostrophic equilibrium \eqref{eq:geo_balance}  \begin{align}
    c u^A_{i,j} + \frac{p^A_{i,j+1} - p^A_{i,j-1}}{2\Delta y} &= 0, &
    c v^A_{i,j} - \frac{p^A_{i+1,j} - p^A_{i-1,j}}{2\Delta x} &= 0, \label{eq:standard_div_x}
\end{align} also plotted in Figure \ref{fig:combined_stat_vortex}(e), which does not decay with time.
\begin{figure}[htbp]
    \centering

    \begin{subfigure}{\textwidth}
        \centering
        \includegraphics[width=\linewidth]{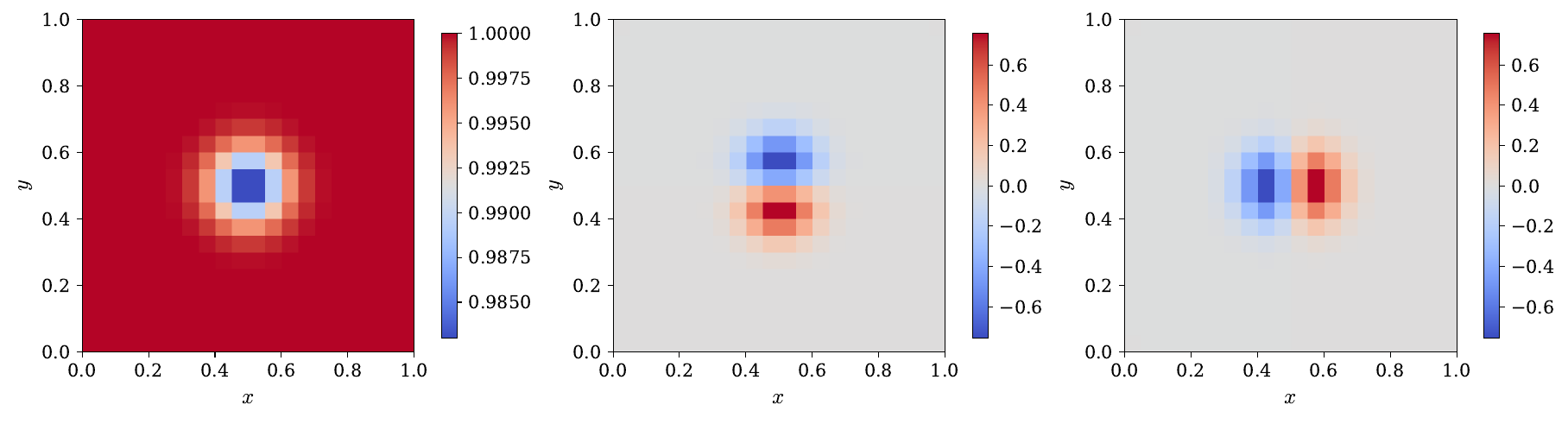}
        \caption{Initial datum ($p^0, u^0, v^0$) given by \eqref{ic:statvortex_linear}.}
        \label{fig:statvor_initial}
    \end{subfigure}

    \vspace{0.1cm}

    \begin{subfigure}{\textwidth}
        \centering
        \includegraphics[width=\linewidth]{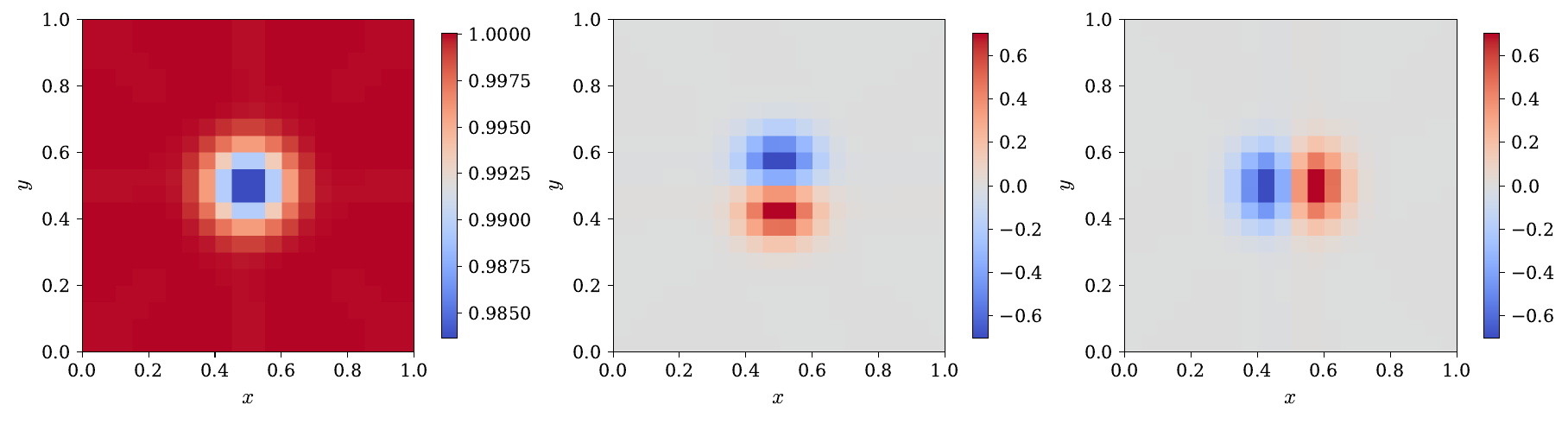}
        \caption{Active Flux solution of ($p, u, v$) (cell average) at $t=1000.0$.}
        \label{fig:statvor_t1000}
    \end{subfigure}

    \vspace{0.1cm}

    \begin{subfigure}[b]{0.45\textwidth}
        \centering
        \includegraphics[width=\textwidth]{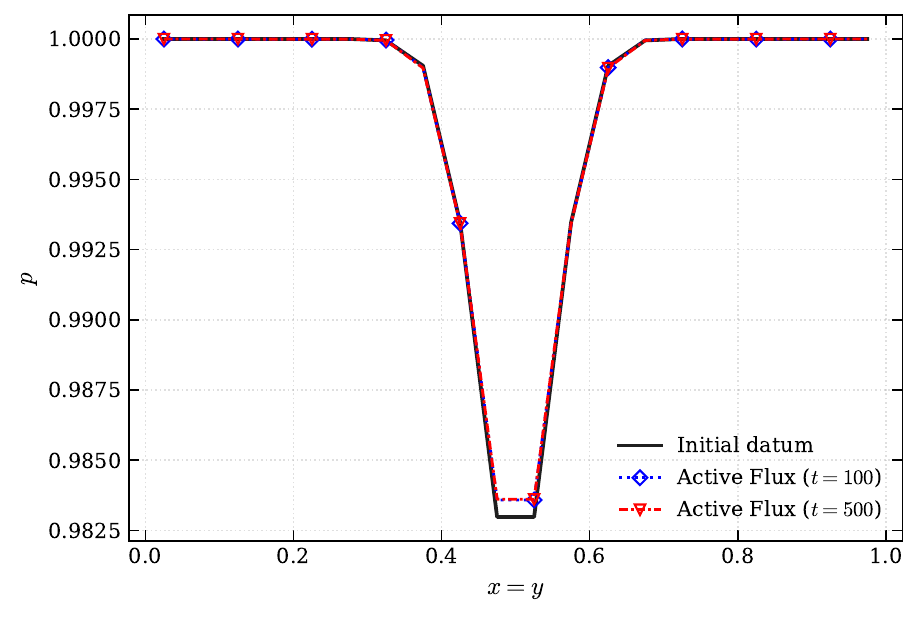}
        \caption{Diagonal cross-section ($x=y$) of $p.$}
        \label{fig:diagonal_global}
    \end{subfigure}
    \hfill
    \begin{subfigure}[b]{0.45\textwidth}
        \centering
        \includegraphics[width=\textwidth]{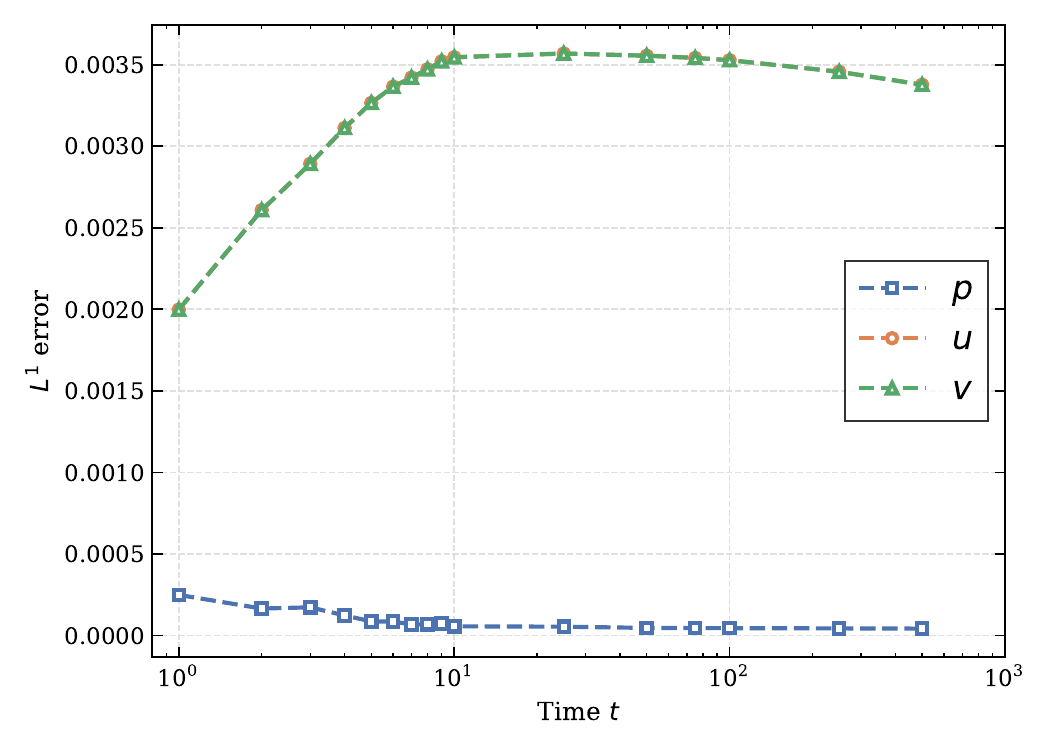}
        \caption{${L}^1$-error evolution until $t=500$.}
        \label{fig:statvor_error}
    \end{subfigure}

    \vspace{0.1cm}

    \begin{subfigure}[b]{0.45\textwidth}
        \centering
\includegraphics[width=\textwidth]{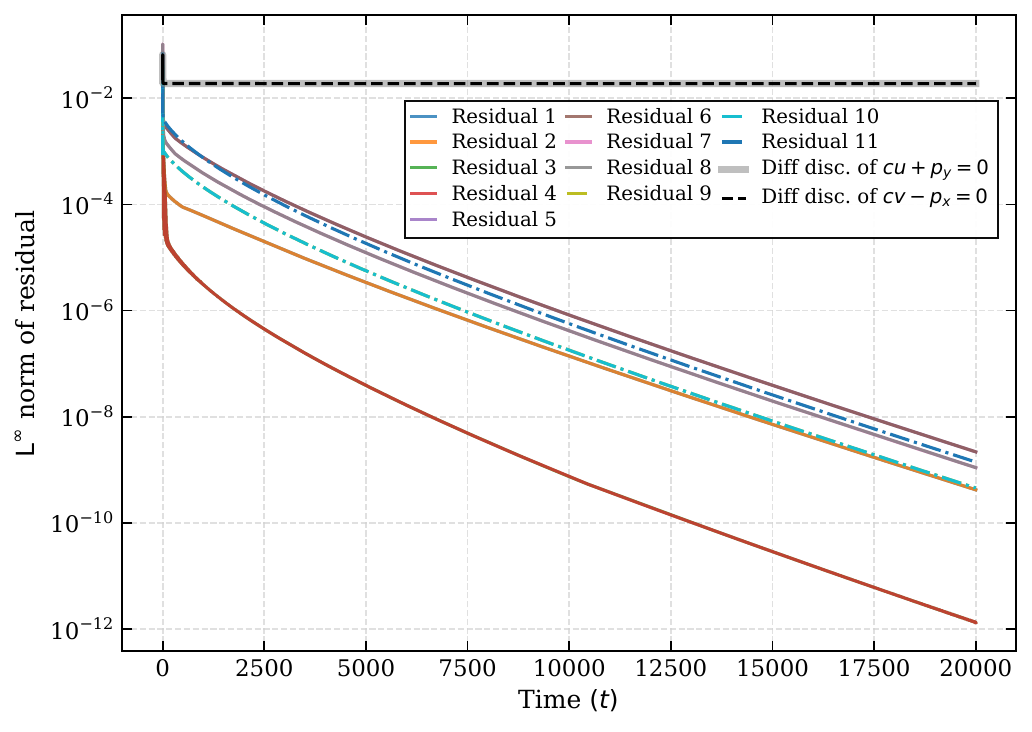}
        \caption{Decay of the discrete residuals \eqref{eq:res1}-\eqref{eq:res11}.}
        \label{fig:fddecay}
    \end{subfigure}
\hfill
      \begin{subfigure}{0.45\textwidth}
        \centering
        \includegraphics[width=\textwidth]{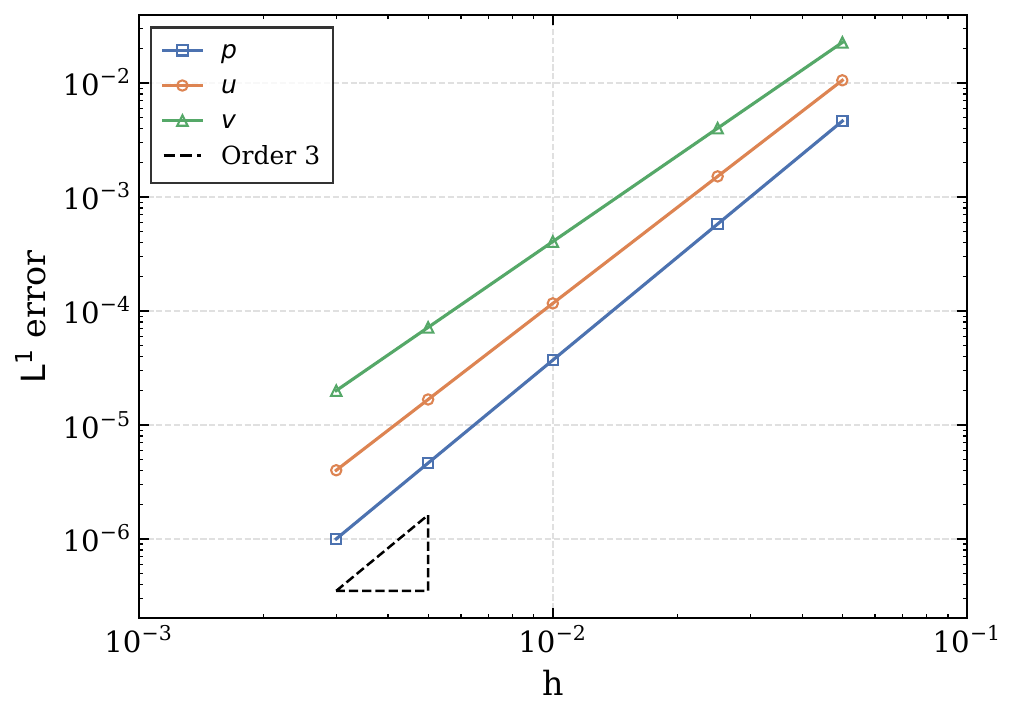}
        \caption{Log-log plot of $L^1$-errors in the cell-average solutions at $t = 0.1.$}
    \end{subfigure}

    \caption{Example \ref{ex:linvortex}. Active Flux evolution of the stationary vortex \eqref{ic:statvortex_linear} on a $20 \times 20$ grid.}
    \label{fig:combined_stat_vortex}
\end{figure}

\begin{figure}[htbp]
    \centering
    \begin{subfigure}{\textwidth}
        \centering
        \includegraphics[width=\linewidth]{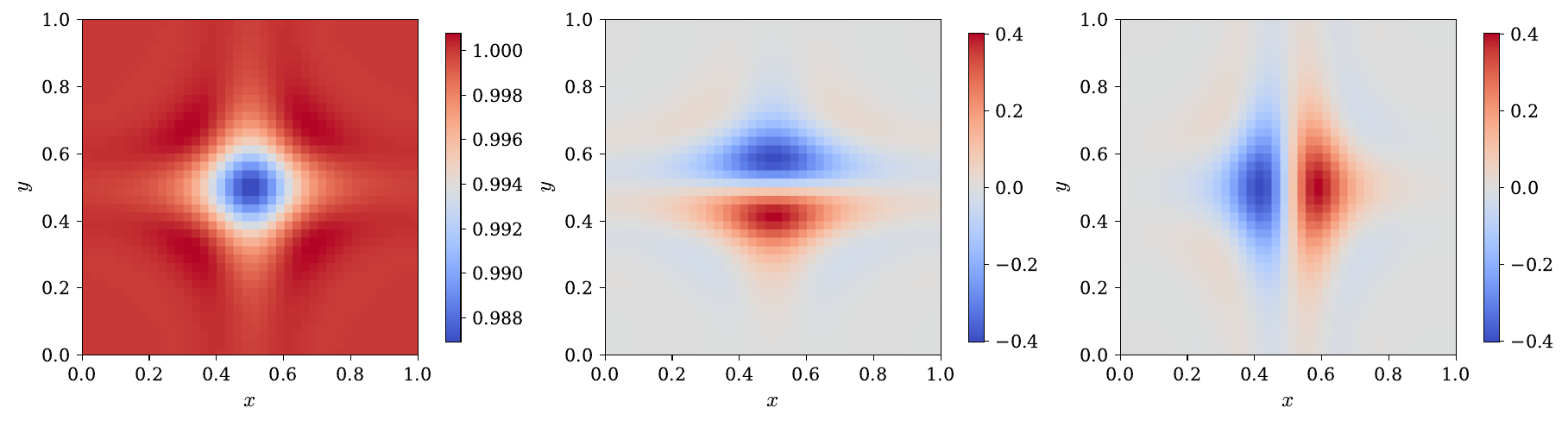}
    \end{subfigure}
    \caption{Example \ref{ex:linvortex}. Evolution of the stationary geostrophic vortex \eqref{ic:statvortex_linear} with a non-stationarity preserving method. Active Flux solution (cell-average) with Rusanov splitting \eqref{eq:rusanov} at $t=100$ on a $40 \times 40$ grid.}
    \label{fig:linvor_rus}
\end{figure}
 Next, we evaluate the experimental order of convergence (E.O.C.) of the method \eqref{eq:af_cellav_evo}, \eqref{eq:af_ptvalue}. The initial datum \eqref{ic:statvortex_linear} is evolved using the method up to time $t=1.0$ with different mesh sizes $\displaystyle h =\Delta x = \Delta y \in \left\{ \frac{1}{20},   \frac{1}{40}, \frac{1}{80}, \frac{1}{160} \right\}.$  The log-log plot of the ${L}^1$-errors against $\Delta x$ for the Active Flux method \eqref{eq:af_cellav_evo}, \eqref{eq:af_ptvalue} is plotted in Figure \ref{fig:combined_stat_vortex}(f). It is observed that the method attains the desired third-order convergence.

\end{example}

\begin{example}\label{ex: well_prep} (Well-prepared datum) In this example, we construct a discrete initial datum which is \emph{well-prepared} in the sense that its Fourier transform belongs to the span of the vector \eqref{eq:af_kernel}. In particular, we seek a well-prepared solution of the form $\hat{\boldsymbol{q}} \exp(\mathbb{i}k_{x}i\Delta x + \mathbb{i}k_{y}j\Delta y),$ where the vector $\hat{\boldsymbol{q}}$ is parallel to \eqref{eq:af_kernel}. This would lead to a complex-valued solution, so we consider only the real parts, which can also be understood as the average of the solutions corresponding to the wavenumbers $\boldsymbol{k} = (k_x, k_y)$ and  $-\boldsymbol{k} = (-k_x, -k_y).$ 
\par
First, we fix the wave numbers $k_x = 2 \pi, k_y = 20 \pi,$  $\theta_x = k_x \Delta x, 
    \theta_y = k_y \Delta y.$
Let the index $(i,j)$ correspond to the physical coordinate $(i\Delta x, j\Delta y)$. Denote $\Phi^{N}_{i,j} = i \theta_x + j \theta_y,  \Phi^{E_H}_{i,j} = \left(i - \frac{1}{2}\right) \theta_x + j \theta_y, \Phi^{E_V}_{i,j} = i \theta_x + \left(j - \frac{1}{2}\right) \theta_y, \Phi^{A}_{i,j} = \left(i - \frac{1}{2}\right) \theta_x + \left(j - \frac{1}{2}\right) \theta_y.
$ We start by setting $\hat{p}^N_{i,j} = 1,$ from which we obtain $p^{N}_{i,j} = \cos(\Phi^{N}_{i,j}) = \textrm{Re}(\exp(\mathbb{i}(i \theta_x + j \theta_y))).$ Using further algebraic simplifications, the remaining well-prepared discrete degrees of freedom can be derived as follows:
\begin{align}\label{eq:wellprep_data}
    u^{N}_{i,j} &= \frac{2 \tan(\theta_y/2)}{c \Delta y} \sin(\Phi^{N}_{i,j}), \quad 
    v^{N}_{i,j} = -\frac{2 \tan(\theta_x/2)}{c \Delta x} \sin(\Phi^{N}_{i,j}),\\  \quad 
    p^{E_H}_{i,j} & = \frac{3+\cos\theta_x}{4 \cos(\theta_x/2)} \cos(\Phi^{E_H}_{i,j}), \quad 
    u^{E_H}_{i,j} = \frac{(3+\cos\theta_x)\tan(\theta_y/2)}{2 c \Delta y \cos(\theta_x/2)} \sin(\Phi^{E_H}_{i,j}), \\[6pt] 
    v^{E_H}_{i,j} &= -\frac{2\sin(\theta_x/2)}{c \Delta x} \sin(\Phi^{E_H}_{i,j}) , \quad 
    p^{E_V}_{i,j} = \frac{3+\cos\theta_y}{4 \cos(\theta_y/2)} \cos(\Phi^{E_V}_{i,j}),\\[12pt] 
    u^{E_V}_{i,j} &= \frac{2\sin(\theta_y/2)}{c \Delta y} \sin(\Phi^{E_V}_{i,j}), \quad 
    v^{E_V}_{i,j} = -\frac{(3+\cos\theta_y)\tan(\theta_x/2)}{2 c \Delta x \cos(\theta_y/2)} \sin(\Phi^{E_V}_{i,j}), \\[6pt] 
    p^{A}_{i,j} &= \frac{(2+\cos\theta_x)(2+\cos\theta_y)}{9 \cos(\theta_x/2) \cos(\theta_y/2)} \cos(\Phi^{A}_{i,j}),\\[6pt]
    u^{A}_{i,j} &= \frac{2(2+\cos\theta_x)\sin(\theta_y/2)}{3 c \Delta y \cos(\theta_x/2)} \sin(\Phi^{A}_{i,j}), \\[6pt]
    v^{A}_{i,j} &= -\frac{2(2+\cos\theta_y)\sin(\theta_x/2)}{3 c \Delta x \cos(\theta_y/2)} \sin(\Phi^{A}_{i,j}).
\end{align}

The $L^1$-errors (with the well-prepared initial datum as the reference) in the Active Flux numerical solution $(u,v,p)$ obtained upon evolving the well-prepared discrete initial datum \eqref{eq:wellprep_data} until  $t=10^3$ is displayed in Figure \ref{fig:af_wellprep}. The $L^1$-errors  are close to the machine precision and indicate only the presence of round-off errors.
\begin{figure}[htbp]
        \centering
        \includegraphics[width=0.5\textwidth]{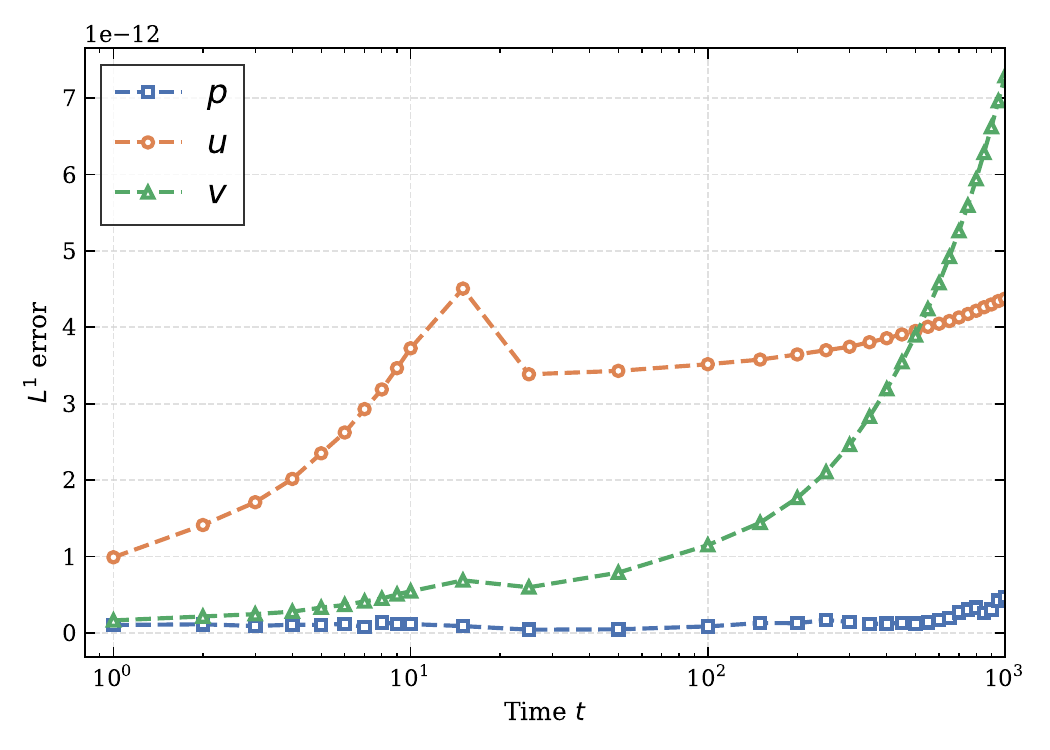}
\caption{Example \ref{ex: well_prep}. ${L}^1$-errors in the Active Flux (cell-average) solutions on a $50 \times 50$ grid as a function of time, upon evolving the well-prepared initial datum.}
    \label{fig:af_wellprep}
\end{figure}
\par
Next,  a local perturbation is added to the pressure ($p$) component of the well-prepared discrete stationary state \eqref{eq:wellprep_data}. The perturbed initial datum for the variable $p$ is given by 
\begin{align}\label{eq:perturb_data}
p(x,y,0) &= p_{\text{wp}}(x,y) + \delta p(x,y), & 
    \delta p(x,y) &= 
    \begin{cases} 
        \theta \exp\left( 1 - \frac{1}{1 - (r/r_0)^2} \right) & \text{if } r < r_0, \\
        0 & \text{otherwise},
    \end{cases}
\end{align}
with $r = \sqrt{(x-x_p)^2 + (y-y_p)^2},$ $(x_p, y_p) = (0.4, 0.43)$ with a radius of {$r_0 = 0.02$}  and an amplitude of $\theta = 10^{-2}$. The initial velocity datum is not perturbed.  Plot of $\norm{\boldsymbol{q}^{\mbox{num}}_{ij} - \boldsymbol{q}^{WP}_{ij}}_{2} = \sqrt{(u^{\mbox{num}}_{ij} - u^{WP}_{ij})^{2}+ (v^{\mbox{num}}_{ij} - v^{WP}_{ij})^{2}+ (p^{\mbox{num}}_{ij} - p^{WP}_{ij})^{2}}$ for the evolved semi-discrete Active Flux numerical solution (cell-average) $\boldsymbol{q}^{\mbox{num}}$ at $t=0.3$ is displayed in Figure \ref{fig:af_perturbation_error_prop_a}(a). We observe that the perturbation is propagated effectively by the semi-discrete Active Flux method without any spurious waves (The result is comparable to the result in Figure 4 of \cite{barsukow2025b}), thereby also validating its stationarity preservation property. This is in contrast to the case of the Rusanov splitting method \eqref{eq:rusanov}, which is not effective in propagating the perturbation, as displayed in Figure \ref{fig:af_perturbation_error_prop_a}(b). 
\begin{figure}[htbp]
    \centering

    \begin{subfigure}[b]{0.48\textwidth}
        \centering
        \includegraphics[width=\linewidth]{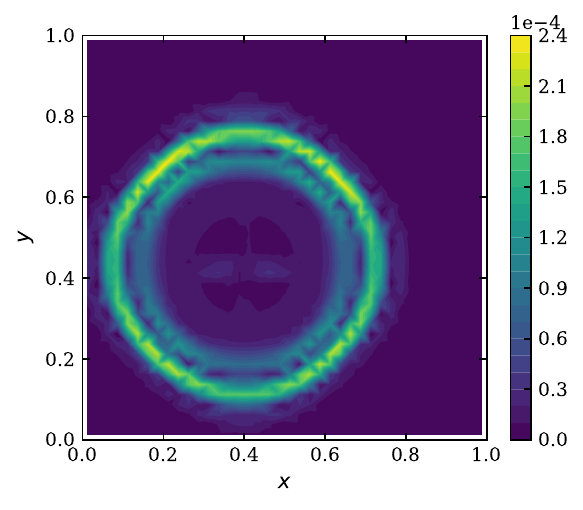}
        \caption{Active Flux}
        \label{fig:perturb_af}
    \end{subfigure}
    \hfill
    \begin{subfigure}[b]{0.48\textwidth}
        \centering
        \includegraphics[width=\linewidth]{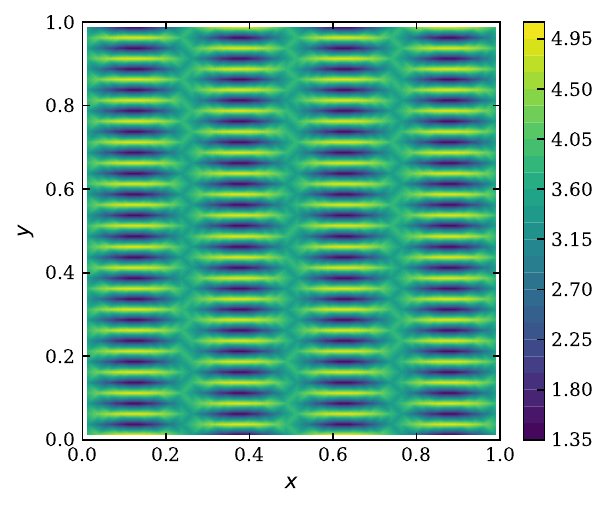}
        \caption{Active Flux with Rusanov splitting}
        \label{fig:perturb_rus}
    \end{subfigure}

    \caption{Example \ref{ex: well_prep}. Evolution of a perturbation of the well-prepared (WP) solution. Plot of $\norm{\boldsymbol{q}_{\mbox{num}} - \boldsymbol{q}_{WP}}_{2}$ for the
    cell-average solution at $t=0.3$ for the perturbed initial
    datum \eqref{eq:perturb_data} on a $40\times40$ grid with
    $\delta=10^{-2}$.}
    \label{fig:af_perturbation_error_prop_a}
\end{figure}
\end{example}
\subsection{Nonlinear shallow water equations}
In this section, we consider the nonlinear shallow water equations on a flat bottom topography with Coriolis source terms \eqref{eq:swe_noncons}, written in the conservative form \eqref{eq:coriolis}
where 
\begin{align}\label{eq:swesys_vars}
   \boldsymbol{q} &= \begin{pmatrix} h \\ hu \\ hv \end{pmatrix}, \,\,  
\boldsymbol{f}^x =  \begin{pmatrix} hu \\ \frac{(hu)^2}{h} + \frac{1}{2}gh^2 \\ \frac{huhv}{h} \end{pmatrix}, \,\, \boldsymbol{f}^y =  
    \begin{pmatrix} hv \\ \frac{huhv}{h} \\ \frac{(hv)^2}{h} + \frac{1}{2}gh^2 \end{pmatrix}, \,\, \boldsymbol{s}(\boldsymbol{q}) =  
    \begin{pmatrix} 0 \\ \Omega hv \\ -\Omega hu, \end{pmatrix},
\end{align}
where $h$ is the water depth, $u$ and $v$ are the velocities, $g$ is the gravitational acceleration, and $\Omega$ is the Coriolis parameter. We only consider the system \eqref{eq:coriolis}-\eqref{eq:swesys_vars} with a constant $\Omega$ as in  \cite{audusse2018, audusse2025}.\\
\textbf{Active Flux for the nonlinear system.} 
Since the Coriolis source term in the nonlinear shallow water system \eqref{eq:coriolis}-\eqref{eq:swesys_vars} is a linear function of the conservative momentum variables $hu$ and $hv,$ the Active Flux numerical method can be formulated in a way similar to the linear case \eqref{eq:af_cellav_evo}, \eqref{eq:af_ptvalue}. The crucial difference is that point value evolution formula  \eqref{eq:af_ptvalue} now relies on the state-dependent Jacobian matrices $J_x(\boldsymbol{q}) = \partial f^x / \partial \boldsymbol{q}$ and $J_y(\boldsymbol{q}) = \partial f^y / \partial \boldsymbol{q}$ which are defined using the primitive velocities $u = hu/h$ and $v = hv/h$ as follows
\begin{equation*}
    J_x(\boldsymbol{q}) = \begin{pmatrix} 0 & 1 & 0 \\ -u^2 + gh & 2u & 0 \\ -uv & v & u \end{pmatrix}, \quad 
    J_y(\boldsymbol{q}) = \begin{pmatrix} 0 & 0 & 1 \\ -uv & v & u \\ -v^2 + gh & 0 & 2v \end{pmatrix}.
\end{equation*}
The positive and negative split Jacobians ($J_x^\pm$ and $J_y^\pm$) required in \eqref{eq:af_ptvalue} are then computed pointwise using the eigenvalue decomposition $J(\boldsymbol{q}^P) = R(\boldsymbol{q}^P) \Lambda(\boldsymbol{q}^P) R^{-1}(\boldsymbol{q}^P).$ For the $x$-direction, the eigenvalues are $\lambda_x \in \{u-c, u, u+c\}$, and for the $y$-direction, $\lambda_y \in \{v-c, v, v+c\}$, where $c = \sqrt{gh}$.

\begin{example}(Stationary vortex) \label{ex:swesys}
We consider a stationary vortex test case for the nonlinear system \eqref{eq:coriolis}-\eqref{eq:swesys_vars}, considered in \cite{audusse2018}.
The computational domain is $[-0.5, 0.5]^2.$ The initial velocity profile is defined as $\boldsymbol{u}^{0 } = u_\theta(r) \bar{\boldsymbol{e}}_{\theta},$ where for $\epsilon > 0,$ we consider
\begin{equation}\label{eq:swe_test_velocity}
    u_\theta(r) = \varepsilon \left[ 5r \chi_{\left\{r < \frac{1}{5}\right\}} + (2 - 5r) \chi_{\left\{\frac{1}{5} \le r < \frac{2}{5}\right\}} \right],
\end{equation}
where $\bar{\boldsymbol{e}}_{\theta} = (-\sin(\theta), \cos(\theta)),$ $r = \sqrt{x^2 + y^2}$ is the radial distance from the origin  and $\varepsilon > 0$ is the vortex strength parameter. We set  $g=9.81$, $\Omega=1$, and the initial water height $(h^0)$ is obtained as the $C^{0}$ solution to 
\begin{equation}\label{eq:swe_test_height}
    \partial_r h^0 = \frac{1}{g} \left( \Omega u_\theta + \frac{u_\theta^2}{r} \right) = \begin{cases}
        \frac{1}{g}(5\epsilon \Omega r + 25\epsilon^{2}r), \quad r < \frac{1}{5},\\
          \frac{1}{g}(\epsilon (2-5r) \Omega  + \epsilon^{2}\frac{(2-5r)^2}{r}, \quad \frac{1}{5} \leq r < \frac{2}{5},\\
          0, \quad \mbox{elsewhere.}
    \end{cases}
\end{equation}This is an exact stationary solution of the nonlinear system \eqref{eq:coriolis}-\eqref{eq:swesys_vars},  but it is not in geostrophic equilibrium \eqref{eq:geo_eq} since $g\nabla h^0  +{\Omega}\left(\boldsymbol{u}^0\right)^\perp = \frac{1}{r}(u_\theta)^2\boldsymbol{e}_{r},$ where $ \boldsymbol{e}_{r} = (\cos(\theta),\sin(\theta))$ is the radial unit vector. \textcolor{black}{However, when $\epsilon \rightarrow 0,$  the stationary solution \eqref{eq:swe_test_velocity}-\eqref{eq:swe_test_height} becomes close to the geostrophic equilibrium, i.e., $ g\nabla h^0  +{\Omega}\left(\boldsymbol{u}^0\right)^\perp  \approx 0$, see \cite{audusse2018, audusse2025}. We note that scaling the velocity field with a parameter $\epsilon \rightarrow 0,$  corresponds to low Froude and low Rossby numbers (which are proportional to the velocity field). The numerical solution (cell-average) obtained by evolving the initial datum \eqref{eq:swe_test_velocity}-\eqref{eq:swe_test_height} with $\epsilon = 0.01$ on a $40 \times 40$ grid up to time $t=200$ is shown in Figure \ref{fig:swe_combined_vortex}. 
In  Figure \ref{fig:rel_err}, following \cite{audusse2025}, for $\epsilon = 10^{-1}, 10^{-2}, 10^{-3} \mbox{and}\, 10^{-4},$  we plot the relative error of the water height at $t=200,$ given by $\displaystyle{|\min(h_{final}) - \min(h_{init})|}/{\max(h_{init}) - \min(h_{init})}.$ The relative error is observed to decay as $\epsilon \rightarrow 0,$ indicating that the method provides reliable approximations to the stationary vortex in low Froude-low Rossby regimes. In contrast, Figure \ref{fig:stat_soln_rusanov_swe} shows that evolving the datum \eqref{eq:swe_test_velocity}-\eqref{eq:swe_test_height} with $\epsilon=0.001$ up to t=500 using an Active Flux method with Rusanov flux splitting \eqref{eq:rusanov} leads to a breakdown of the vortex structure.}
   \begin{figure}[htbp]
    \centering

    \begin{subfigure}{\textwidth}
        \centering
        \includegraphics[width=0.9\linewidth, height=0.33\textheight, keepaspectratio]{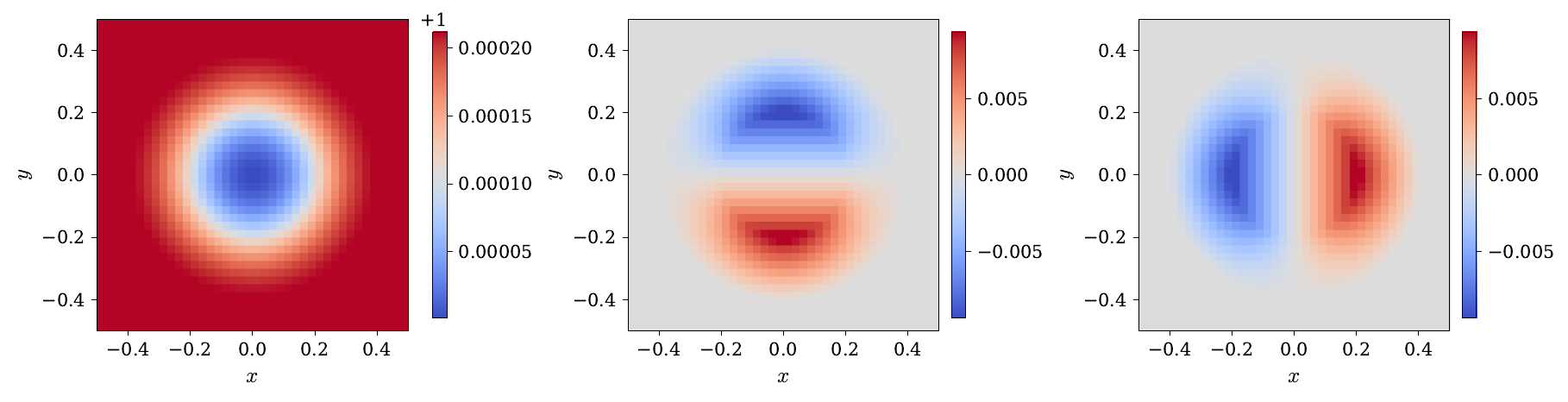}
        \caption{Initial datum ($h^0, (hu)^{0}, (hv)^{0}$) given by \eqref{eq:swe_test_velocity}-\eqref{eq:swe_test_height} with $\epsilon = 0.01.$}
        \label{fig:swe_initial}
    \end{subfigure}

    \vspace{0.1cm}

    \begin{subfigure}{\textwidth}
        \centering
        \includegraphics[width=0.9\linewidth, height=0.33\textheight, keepaspectratio]{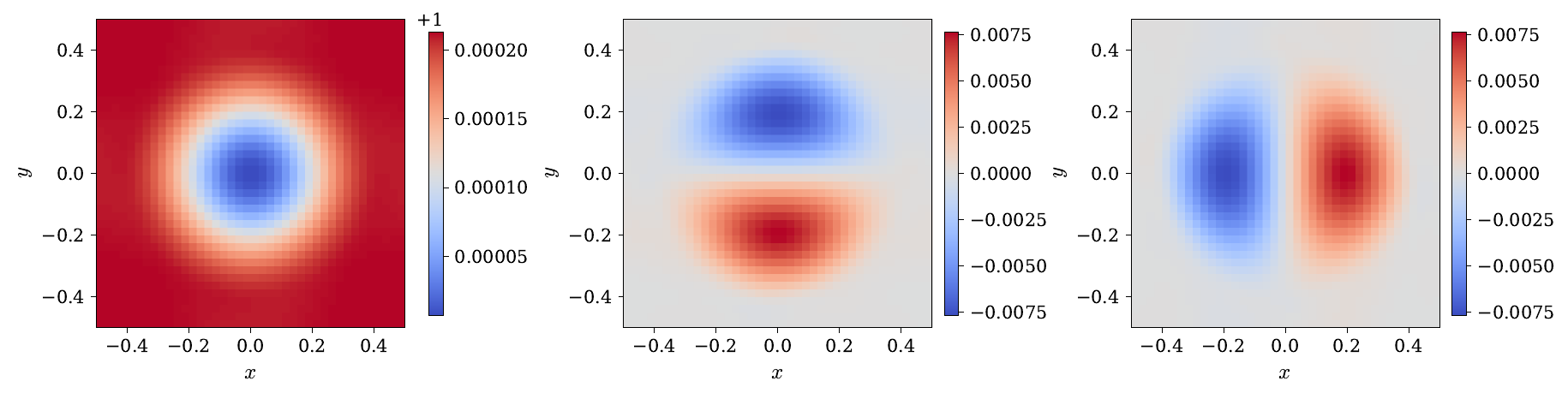}
        \caption{Active Flux solution (cell-average) at $t=200.0$ on a $40 \times 40$ grid.}
        \label{fig:swe_t200}
    \end{subfigure}

    \vspace{0.1cm}

    \begin{subfigure}[b]{0.47\textwidth}
        \centering
        \includegraphics[width=\textwidth]{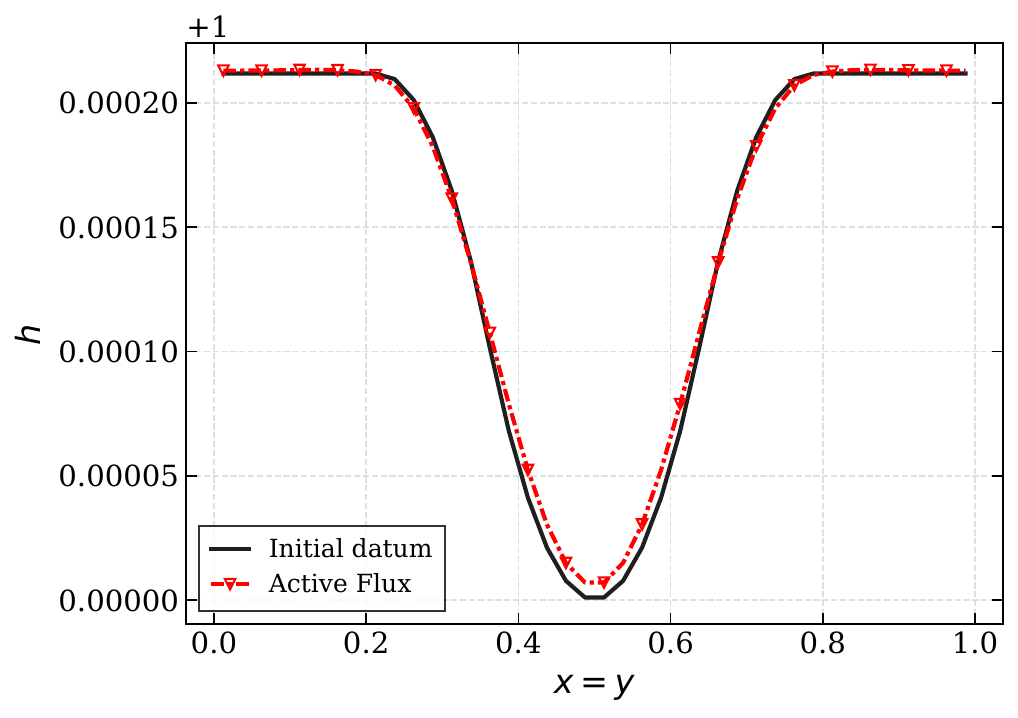}
        \caption{Diagonal 1D cut of $h$ (along $x=y$).}
        \label{fig:swe_diag_h}
    \end{subfigure}
    \hfill
    \begin{subfigure}[b]{0.47\textwidth}
        \centering
        \includegraphics[width=\textwidth]{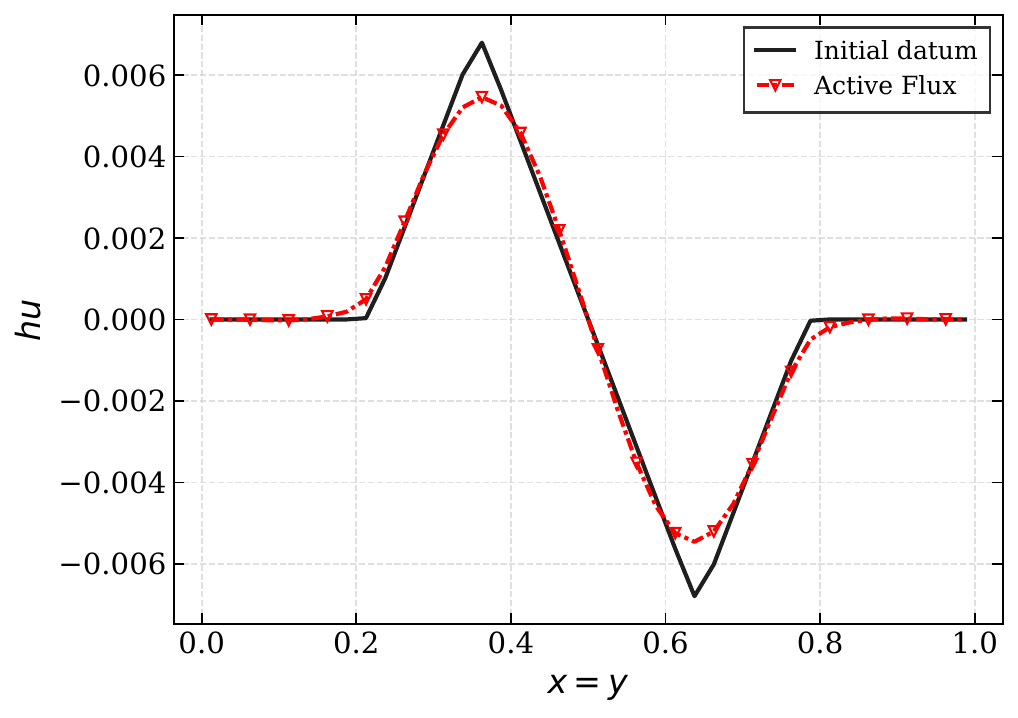}
        \caption{Diagonal 1D cut of $hu$ (along $x=y$).}
        \label{fig:swe_diag_hu}
    \end{subfigure}

    \caption{Example \ref{ex:swesys}. Active Flux solution  on a $40\times40$ grid at time $t=200$ obtained by evolving the stationary vortex \eqref{eq:swe_test_velocity}-\eqref{eq:swe_test_height} where $\epsilon=0.01$. The bottom row displays the diagonal ($x=y$) 1D cuts of the solution.}
    \label{fig:swe_combined_vortex}
\end{figure}

\begin{figure}[htbp]
    \centering

        \centering
        \includegraphics[width=0.5\linewidth, height=0.38\textheight, keepaspectratio]{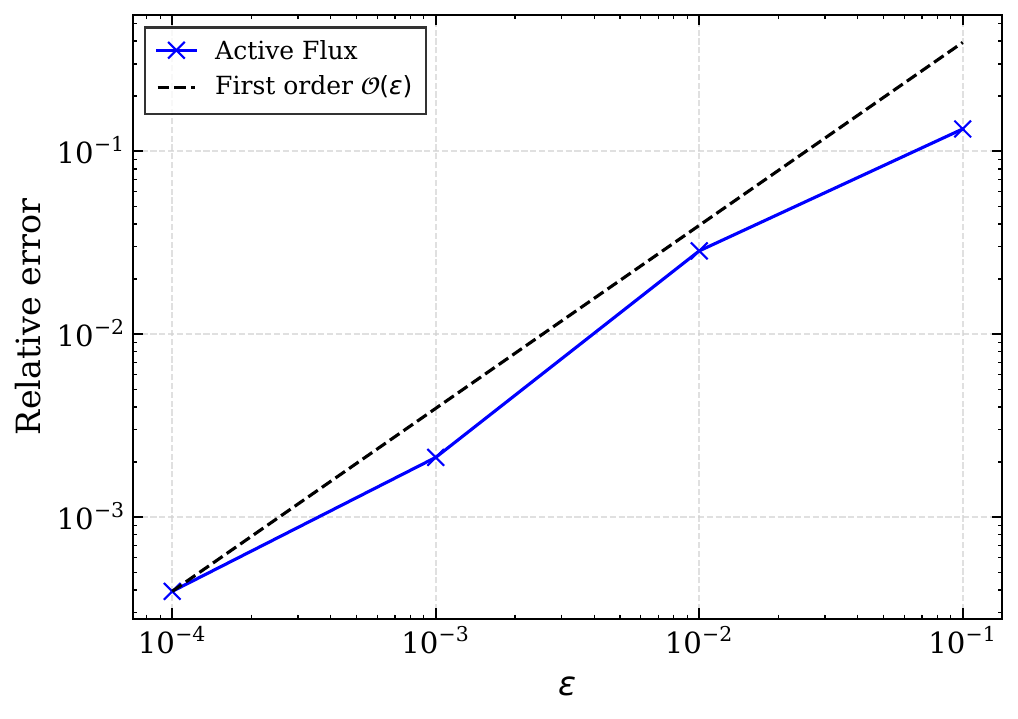}
    \caption{Example \ref{ex:swesys}. Relative error of water depth at $t=200$ obtained with the Active Flux method as the Froude parameter $\epsilon$ in \eqref{eq:swe_test_velocity}-\eqref{eq:swe_test_height} tends to 0.}
    \label{fig:rel_err}
\end{figure}

\begin{figure}[htbp]
    \centering
    \begin{subfigure}{\textwidth}
        \centering
        \includegraphics[width=\linewidth]{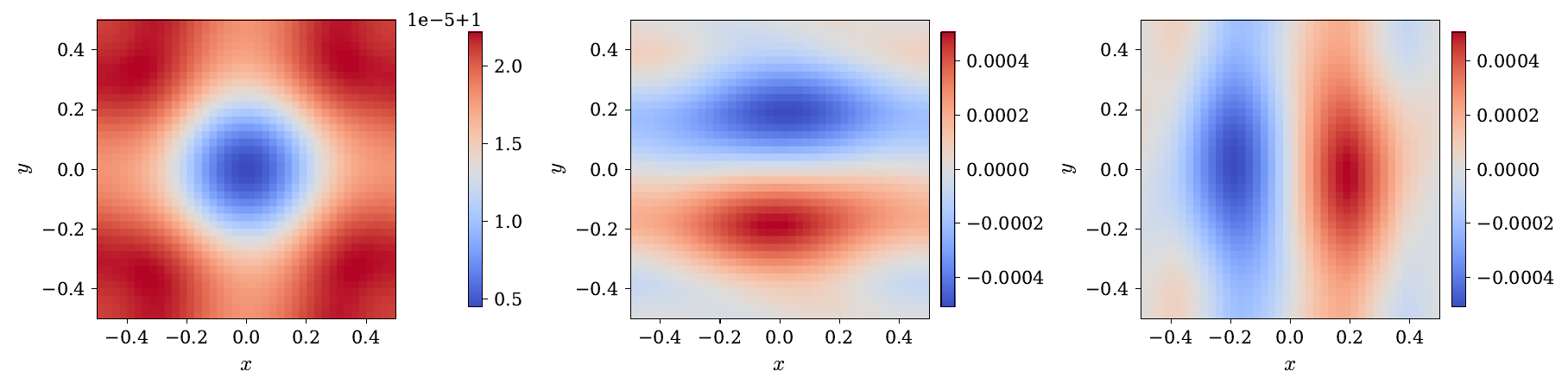}
    \end{subfigure}
    \caption{Example \ref{ex:swesys}. The evolution of the stationary vortex  \eqref{eq:swe_test_velocity}-\eqref{eq:swe_test_height} with $\epsilon =  0.001$ using Active Flux with Rusanov splitting \eqref{eq:rusanov}. The degradation of the vortex structure (cell-average solution of $(h, hu, hv)$ is shown) by time $t=500,$ when computed on a $40 \times 40$ grid can be observed.}
    \label{fig:stat_soln_rusanov_swe}
\end{figure}
\end{example}

\begin{example}(Translated vortex)\label{ex:tra_vor}
Next, we simulate the translated vortex test case studied in \cite{audusse2025}. For $\epsilon >0,$ the initial datum is given by:
\begin{align}
\label{ic:travor}
h^0(x,y) &= 1 + \frac{\epsilon}{g}x + 
\begin{cases} 
\frac{5\Omega\epsilon}{2g}r^2, & r \le 0.2,\\ 
\frac{\Omega\epsilon}{10g} - \frac{\Omega\epsilon}{g}(0.3 - 2r + 2.5r^2), & 0.2 < r \le 0.4, \\ 
\frac{\Omega\epsilon}{5g}, & r > 0.4, 
\end{cases}\\
\boldsymbol{u}^0(x,y) &= \begin{pmatrix} 0 \label{ic:travor_vel}\\ \frac{\epsilon}{\Omega} \end{pmatrix} + 
\begin{cases} 
-5\epsilon r\boldsymbol{e}_{\theta}, & r \le 0.2,\\ 
-(2-5r)\epsilon \boldsymbol{e}_{\theta}, & 0.2 < r \le 0.4,\\ 
\boldsymbol{0}, & r > 0.4, 
\end{cases}
\end{align}
where $r = \sqrt{x^2+y^2},$  $\theta$ are the polar coordinates and $\boldsymbol{e}_{\theta}:= (\sin(\theta), \cos(\theta))^T$. We set the parameters to $g=1$, $\Omega=1$, and $\epsilon=0.01$. The computational domain is $[-0.5, 0.5]^2,$ which is discretized using a $40 \times 40$ grid. \textcolor{black}{We note that the datum \eqref{ic:travor} is in geostrophic equilibrium.} Similarly to \cite{audusse2025}, we enforce periodic boundary conditions at $y = \pm 0.5$, and wall-type boundary conditions at $x = \pm 0.5.$ Figure~\ref{fig:translated_vortex} displays the initial datum and the Active Flux cell-average solutions $(h, hu, hv)$ at $t=20.0$. The Active Flux method is observed to preserve the vortex structure and amplitude of the solution.
\begin{figure}[htbp]
    \centering
    
    \begin{subfigure}{\textwidth}
        \centering
        \includegraphics[width=0.86\linewidth, height=0.38\textheight, keepaspectratio]{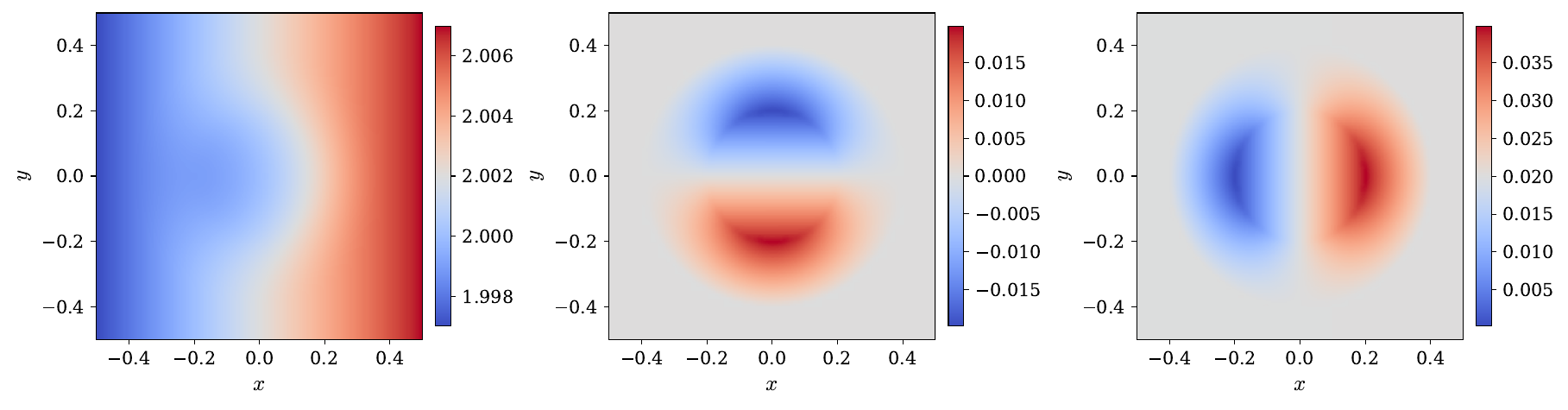}
        \caption{Initial datum  ($h^0, (hu)^{0}, (hv)^{0}$) given by \eqref{ic:travor}-\eqref{ic:travor_vel}.}
    \end{subfigure}

    \vspace{0.1cm}

    \begin{subfigure}{\textwidth}
        \centering
        \includegraphics[width=0.86\linewidth, height=0.38\textheight, keepaspectratio]{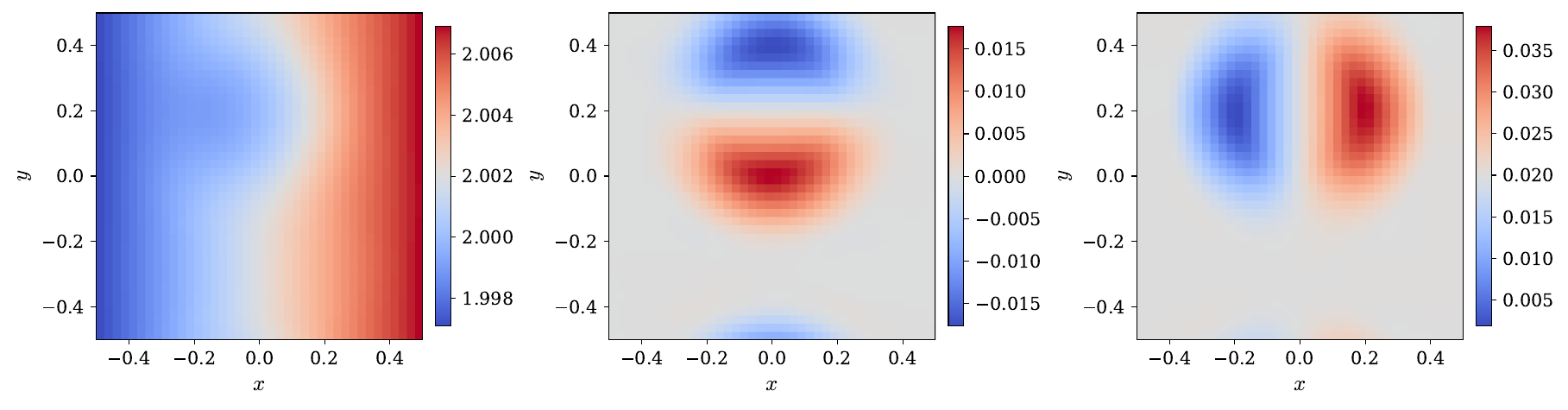}
        \caption{Active Flux solution  $(h, hu, hv)$ at $t=20.$}
    \end{subfigure}
    \vspace{0.1cm}
    \begin{subfigure}{0.48\textwidth}
        \centering
        \includegraphics[width=\linewidth]{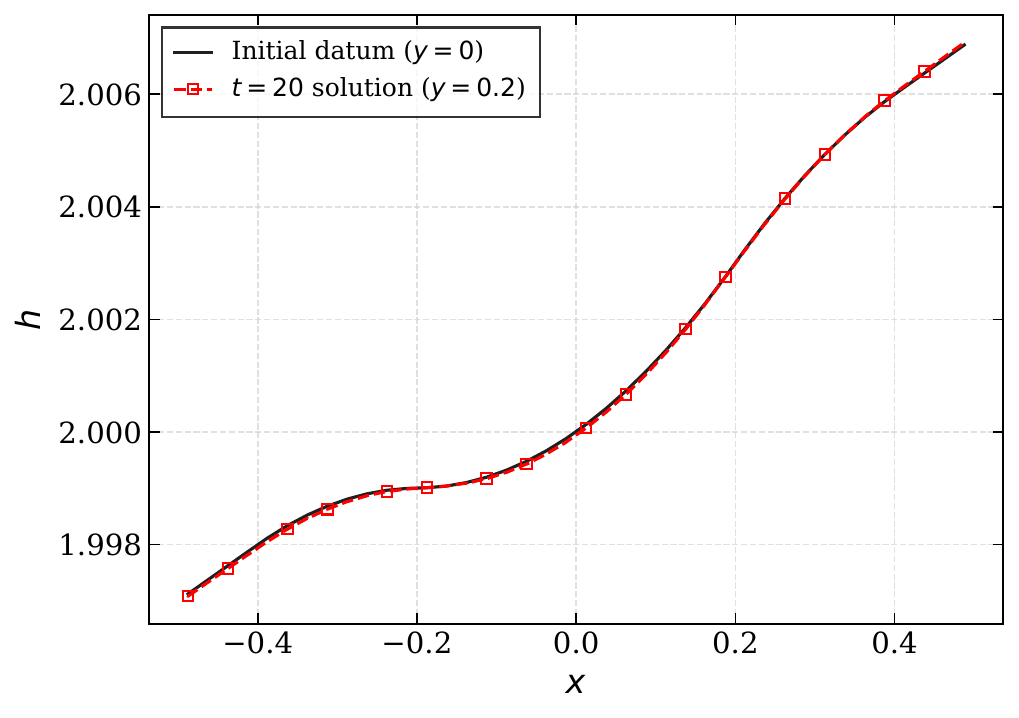}
        \caption{Cross section ($y=0$) of $h$ for the initial datum and $y=0.2$ for the Active Flux solution (cell-averages) at $t=20.0$.}
    \end{subfigure}
    \hfill
    \begin{subfigure}{0.48\textwidth}
        \centering
        \includegraphics[width=\linewidth]{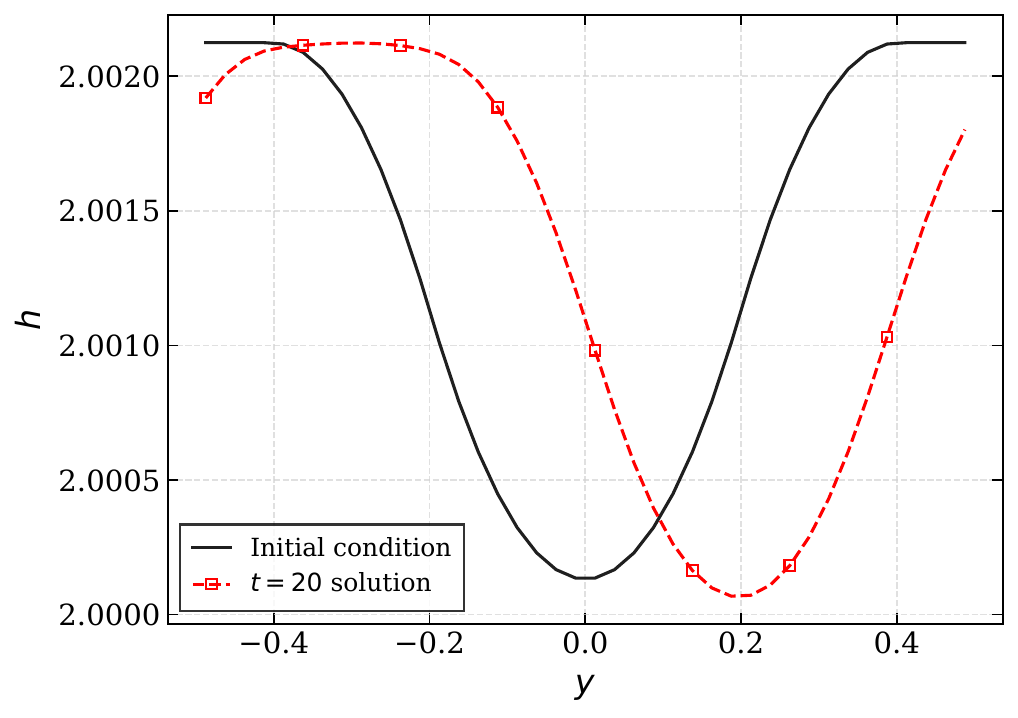}
        \caption{Cross section ($x=0$) of $h$ for the initial datum and the Active Flux solution (cell-averages) at $t=20.0$.}
    \end{subfigure}

    \caption{Example \ref{ex:tra_vor}. Active Flux solution on a $40 \times 40$ grid with $\epsilon = 0.01$.}
    \label{fig:translated_vortex}
\end{figure}
\end{example}

\begin{example}{(Geostrophic adjustment)}\label{ex:geo_adj} Finally, we consider the geostrophic adjustment test proposed in \cite{castro2008}. The computational domain is $[-10, 10]^2.$ We set the gravity and Coriolis parameters to $g=1$ and $\omega=1$. The fluid is initially at rest with an elliptical perturbation in the water height:
\begin{align}\label{ic:geoadj}
    h_0(x, y) &= 1 + \frac{1}{4} \left( 1 - \tanh\left(10 \left( \sqrt{2.5x^2 + 0.4y^2} - 1 \right)\right) \right), \\
    u_0(x, y) &= 0, \quad v_0(x, y) = 0. \nonumber
\end{align}
We note that this initial state is not in geostrophic equilibrium \eqref{eq:geo_eq}.  Figure \ref{fig:geostrophic_adjustment} displays the numerical solution for water height ($h$) (cell-averages) obtained using the Active Flux method at $t=4$ and $t=8$. It is observed that the method gives a good approximation of the outward-propagating discontinuities and preserves the overall structure of the solution.
\begin{figure}[htbp]
    \centering
    \begin{subfigure}{0.48\textwidth}
        \centering
        \includegraphics[width=\linewidth, height=4.1cm, keepaspectratio]{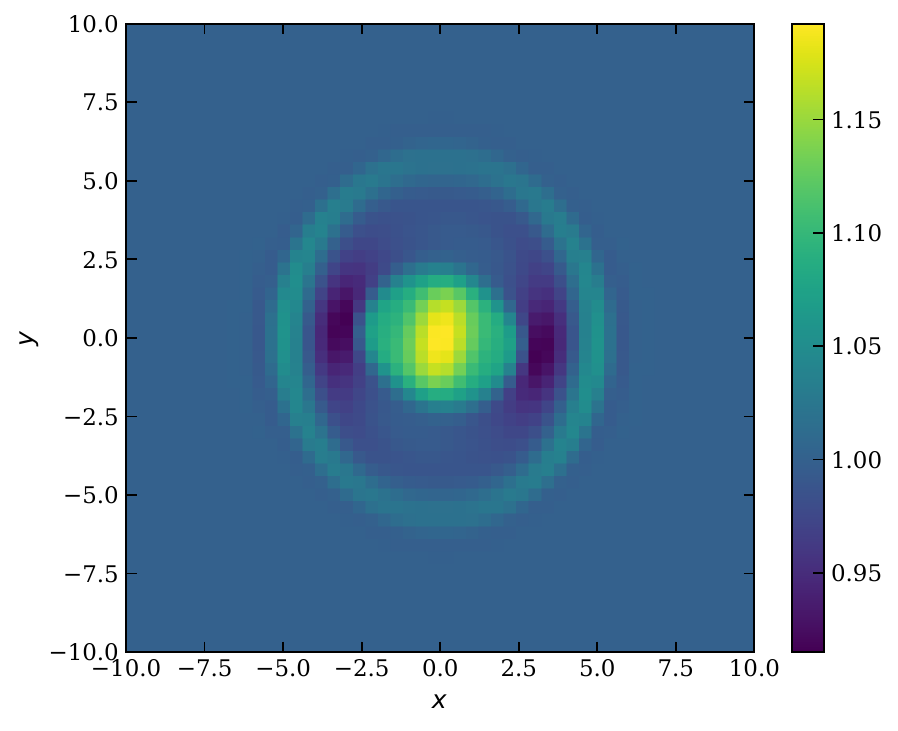} 
        \caption{Water height $h$ (cell-average) at $t=4$.}
    \end{subfigure}
    \hfill
    \begin{subfigure}{0.48\textwidth}
        \centering
        \includegraphics[width=\linewidth, height=4.1cm, keepaspectratio]{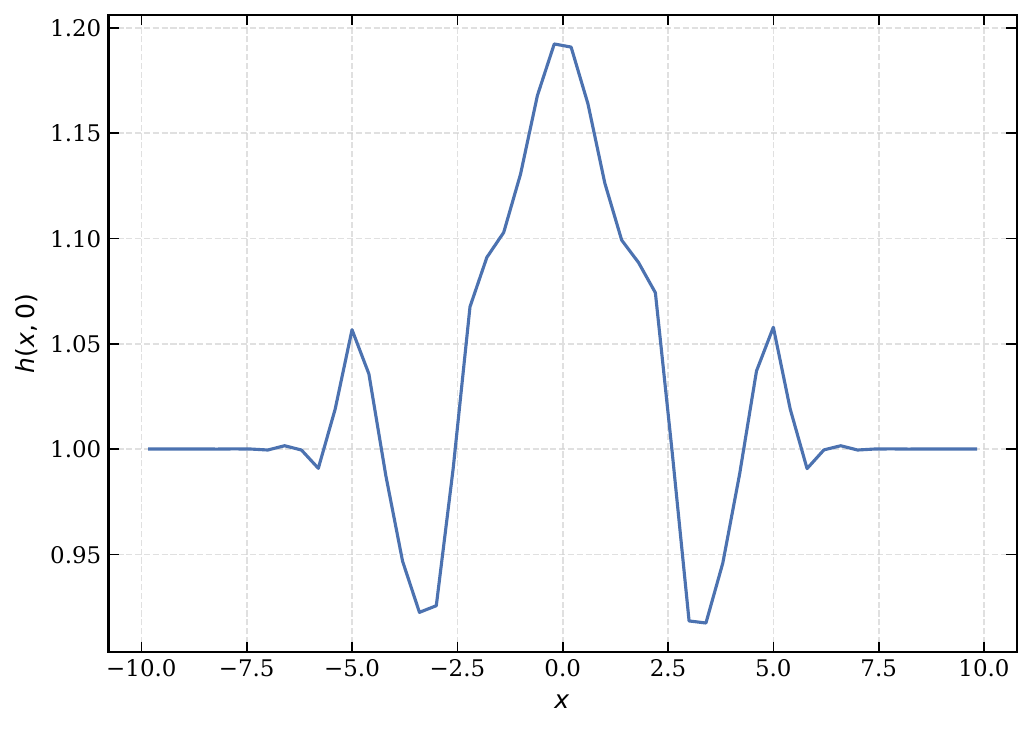}
        \caption{Cross-section along $y=0$ of $h$ at $t=4$.}
    \end{subfigure}

    \vspace{0.1cm} 

    \begin{subfigure}{0.48\textwidth}
        \centering
        \includegraphics[width=\linewidth, height=4.1cm, keepaspectratio]{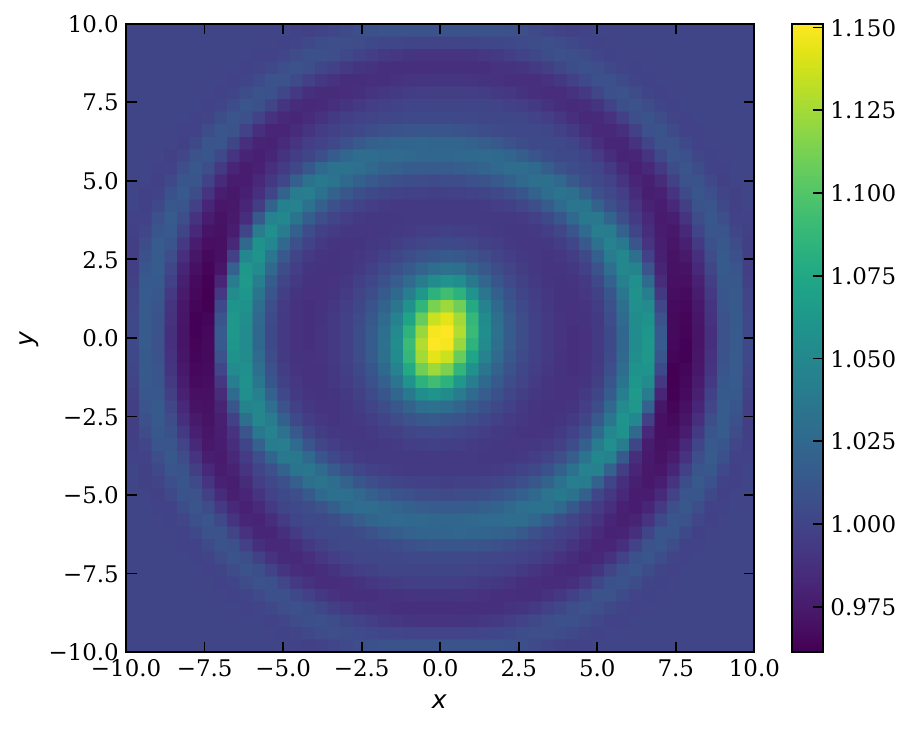} 
        \caption{Water height $h$ (cell-averages) at $t=8$.}
    \end{subfigure}
    \hfill
    \begin{subfigure}{0.48\textwidth}
        \centering
        \includegraphics[width=\linewidth, height=4.1cm, keepaspectratio]{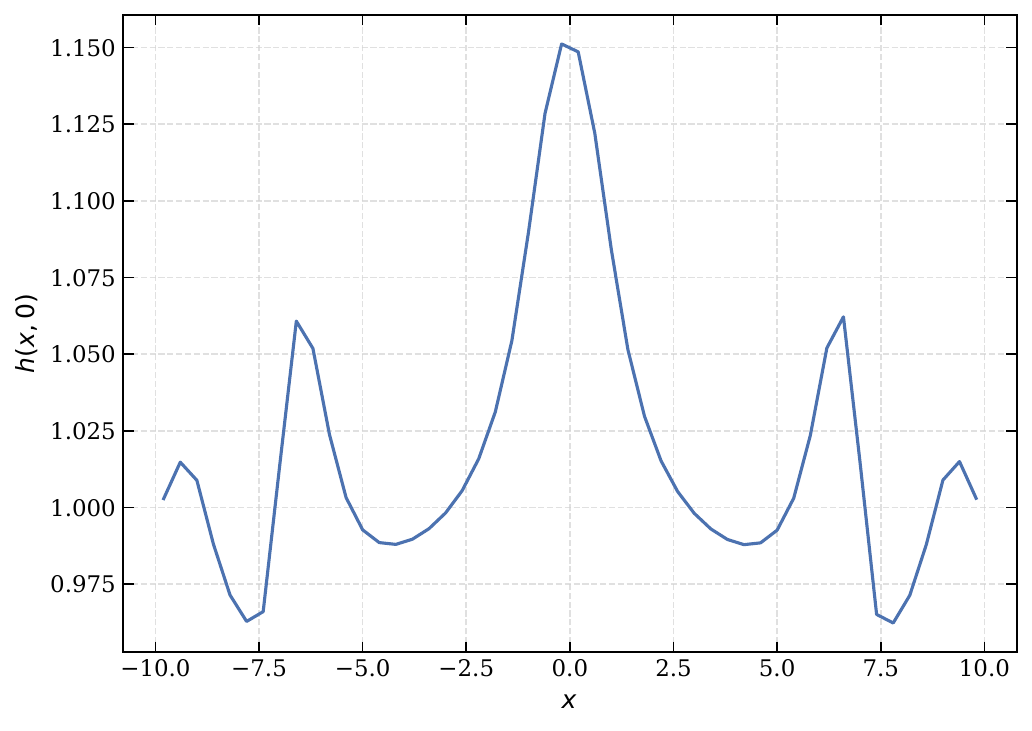}
        \caption{Cross-section along $y=0$ of $h$ at $t=8$.}
    \end{subfigure}
    
    \caption{Example \ref{ex:geo_adj}. Active Flux solution of the water height $h$ using a $50 \times 50$ grid. The top row shows the numerical solution at $t=4$, and the bottom row at $t=8$. The left panels display the cell-average solution, and the right panels display the corresponding 1D cross-section.}
    \label{fig:geostrophic_adjustment}
\end{figure}

\end{example}

\section{Conclusion}\label{sec:conclusion}
In this work, we investigated a semi-discrete Active Flux method for approximating linearized and nonlinear shallow water equations with Coriolis source terms. Our Fourier analysis established that for the linearized system, the method admits discrete stationary states corresponding to the analytical geostrophic equilibrium, proving it to be stationarity-preserving without requiring additional modifications. This highlights the intrinsic stationarity preservation properties of the Active Flux method, extending previous findings for homogeneous linear systems \cite{barsukow2023, Barsukow2025Fourier}. Numerical experiments, including simulation of a stationary vortex (Figure \ref{fig:combined_stat_vortex}) and that of a well-prepared initial datum (see Figure \ref{fig:af_wellprep}) validate the theoretical results for the method in the linear system case. Numerical investigation in the case of nonlinear shallow water equations also indicated good performance of the proposed method, especially in regimes near the geostrophic equilibrium. Future research will focus on deriving an energy stability property for the semi-discrete Active Flux method, as well as extending the theoretical well-balanced analysis to the method in the nonlinear system case.

\section*{Acknowledgements}
Wasilij Barsukow acknowledges support of the Agence Nationale de la Recherche (ANR) under grant ANR-25-CE40-3980 (project HoCo). Christian Klingenberg and Nikhil Manoj acknowledge funding by the German Science Foundation (DFG) KL 566/24-1 as part of a joint project with the Spanish Agencia Estatal de Investigación (AEI).

\bibliographystyle{plainurl}
\bibliography{reference}

\begin{appendices}
\section*{Appendix}
\section{Discrete differential operators in the Active Flux point value update}
\label{app:discrete_operators}
In this section, we elaborate on the formulas for the discrete differential operators used in the Active Flux point update \eqref{eq:af_ptvalue}. The discrete spatial derivatives in the $x$-direction evaluated at the node $P = (x_{i+1/2}, y_{j+1/2})$ are given by:
\begin{align}
    D_x^+ (q^{N}_{i, j}) &= \frac{1}{\Delta x} \left( 3 q^{N}_{i, j} + q^{N}_{i-1, j} - 4 {q}^{E_H}_{i,j} \right), \\
    D_x^- (q^{N}_{i, j}) &= \frac{1}{\Delta x} \left( -3 q^{N}_{i, j} - q^{N}_{i+1, j} + 4 {q}^{E_H}_{i+1,j} \right).
\end{align}
The discrete spatial derivatives in the $y$-direction evaluated at the node\\ $\displaystyle P = (x_{i+1/2}, y_{j+1/2})$  are given by:
\begin{align}
    D_y^+ (q^{{N}}_{i, j}) &= \frac{1}{\Delta y} \left( 3 q^{{N}}_{i, j} + q^{{N}}_{i, j-1} - 4 q^{E_V}_{i, j} \right), \\ 
    D_y^- (q^{{N}}_{i, j}) &= \frac{1}{\Delta y} \left( -3 q^{{N}}_{i, j} - q^{{N}}_{i, j+1} + 4 q^{E_V}_{i, j+1} \right).
\end{align}
The discrete spatial derivatives in the $x$-direction evaluated at the vertical edge midpoint $P = (x_{i+1/2}, y_j)$:
\begin{align}
    D_x^+(q^{E_V}_{i, j}) &= \frac{1}{\Delta x} \Bigg( 4 q^{E_V}_{i, j} + 2 q^{E_V}_{i-1, j} + q^{E_H}_{i, j} + q^{E_H}_{i, j-1} \nonumber \\
    &+ \frac{1}{4} \left( q^{N}_{i, j} + q^{N}_{i-1, j} + q^{N}_{i, j-1} + q^{N}_{i-1, j-1} \right) - 9 {q}^{A}_{i,j} \Bigg), \\
    D_x^- (q^{E_V}_{i, j}) &= \frac{1}{\Delta x} \Bigg( -4 q^{E_V}_{i, j} - 2 q^{E_V}_{i+1, j} - q^{E_H}_{i+1, j} - q^{E_H}_{i+1, j-1} \nonumber \\
    & \spc - \frac{1}{4} \left( q^{N}_{i, j} + q^{N}_{i+1, j} + q^{N}_{i, j-1} + q^{N}_{i+1, j-1} \right)   + 9 {q}^{A}_{i+1,j} \Bigg). \nonumber
\end{align}
The discrete spatial derivatives in the $y$-direction evaluated at the vertical edge midpoint $P = (x_{i+1/2}, y_j)$ :
\begin{align}
    &D_y^+ (q^{E_V}_{i, j}) = \frac{1}{\Delta y} \left( q^{{N}}_{i, j} - q^{{N}}_{i, j-1} \right), &
    D_y^- (q^{E_V}_{i, j}) = \frac{1}{\Delta y} \left( q^{{N}}_{i, j} - q^{{N}}_{i, j-1} \right).
\end{align}

The $x$ and $y$ spatial derivatives at the horizontal edge midpoint  $P = (x_{i}, y_{j+1/2})$ can be computed similarly.

\section{Update equations upon applying the Fourier transform}\label{appx:fourier_update}

The semi-discrete update formulas \eqref{eq:af_cellav_evo}-\eqref{eq:af_ptvalue} upon the the application of the Fourier transform are elaborated in this section. The update of the nodal point value can be written as
\begin{equation}
\begin{split}
    0 = \frac{\mathrm{d}}{\mathrm{d}t} \hat{q}^{N} &+ \left[ J_x^+ \frac{1}{\Delta x} \left(3 + \frac{1}{t_x}\right) + J_x^- \frac{1}{\Delta x} (-3 - t_x) + J_y^+ \frac{1}{\Delta y} \left(\frac{1}{t_y} + 3\right) \right. \\
    &\quad \left. + J_y^- \frac{1}{\Delta y} (-t_y - 3) \right] \hat{q}^{N} + \left[ \frac{-4}{\Delta x} (J_x^+ - J_x^- t_x) \right] \hat{q}^{\mathrm{E_H}} \\
    &+ \left[ \frac{-4}{\Delta y} (J_y^+ - J_y^- t_y) \right] \hat{q}^{\mathrm{E_V}} -{\begin{pmatrix}
    c\hat{v}^{N} \\ -c\hat{u}^{N} \\0 
    \end{pmatrix}} .
\end{split}
\end{equation}
The update equation for the point value at a vertical edge is
\begin{equation}
\begin{split}
    0& = \frac{\mathrm{d}}{\mathrm{d}t} \hat{q}^{\mathrm{E_V}} + \left[ -\frac{9}{\Delta x}(J_x^+ - J_x^- t_x) \right] \hat{q}^{\mathrm{A}} + \left[ \frac{1}{\Delta x} \left( J_x^+ \left(1 + \frac{1}{t_y}\right) + J_x^- \left(-\frac{t_x}{t_y} - t_x\right) \right) \right] \hat{q}^{\mathrm{E_H}} \\
    & \spc + \left[ \frac{2}{\Delta x} \left( J_x^+ \left(2 + \frac{1}{t_x}\right) + J_x^- (-2t_x - 2) \right) \right] \hat{q}^{\mathrm{E_V}} \\
    &\spc + \left[ J_x^+ \frac{1}{4\Delta x} \left(\frac{1}{t_x t_y} + \frac{1}{t_y} + 1 + \frac{1}{t_x}\right) + J_x^- \frac{1}{4\Delta x} \left(-\frac{1}{t_y} - \frac{t_x}{t_y} - t_x - 1\right) \right. \\
    &\spc \quad \left. + J_y^+ \frac{1}{\Delta y} \left(1 - \frac{1}{t_y}\right) + J_y^- \frac{1}{\Delta y} \left(1 - \frac{1}{t_y}\right) \right] \hat{q}^{N} -{\begin{pmatrix}
    c\hat{v}^{E_V} \\ -c\hat{u}^{E_V} \\0 
    \end{pmatrix}}
\end{split} 
\end{equation}
and for the update of the point value at a horizontal edge
\begin{equation}
\begin{split}
    0 &= \frac{\mathrm{d}}{\mathrm{d}t} \hat{q}^{\mathrm{E_H}}  + \left[ -\frac{9}{\Delta y}(J_y^+ - J_y^- t_y) \right] \hat{q}^{\mathrm{A}} + \left[ \frac{2}{\Delta y} \left( J_y^+ \left(\frac{1}{t_y} + 2\right) + J_y^- (-2 - t_y) \right) \right] \hat{q}^{\mathrm{E_H}} \\
    &\spc + \left[ \frac{1}{\Delta y} \left( J_y^+ \left(1 + \frac{1}{t_x}\right) + J_y^- \left(-t_y - \frac{t_y}{t_x}\right) \right) \right] \hat{q}^{\mathrm{E_V}} \\
    &\spc + \left[ J_x^+ \frac{1}{\Delta x} \left(1 - \frac{1}{t_x}\right) + J_x^- \frac{1}{\Delta x} \left(1 - \frac{1}{t_x}\right) + J_y^+ \frac{1}{4\Delta y} \left(\frac{1}{t_x t_y} + \frac{1}{t_y} + 1 + \frac{1}{t_x}\right) \right. \\
    &\spc \quad \left. + J_y^- \frac{1}{4\Delta y} \left(-\frac{1}{t_x} - 1 - t_y - \frac{t_y}{t_x}\right) \right] \hat{q}^{N} -{\begin{pmatrix}
    c\hat{v}^{E_H} \\ -c\hat{u}^{E_H} \\0 
    \end{pmatrix}}.
\end{split} 
\end{equation}

The update of the average is computed as follows
\begin{equation}
\begin{split}
    0 &= \frac{\mathrm{d}}{\mathrm{d}t} \hat{q}^{\mathrm{A}}  + \left[ \frac{2}{3\Delta y} J_y \left(1 - \frac{1}{t_y}\right) \right] \hat{q}^{\mathrm{E_H}} + \left[ \frac{2}{3\Delta x} J_x \left(1 - \frac{1}{t_x}\right) \right] \hat{q}^{\mathrm{E_V}} \\
    &\spc + \left[ \frac{1}{6\Delta x} J_x \left(1 - \frac{1}{t_x} + \frac{1}{t_y} - \frac{1}{t_x t_y}\right) + \frac{1}{6\Delta y} J_y \left(1 - \frac{1}{t_y} + \frac{1}{t_x} - \frac{1}{t_x t_y}\right) \right] \hat{q}^{N}  - {\begin{pmatrix}
    c\hat{v}^{A} \\ -c\hat{u}^{A} \\0 
    \end{pmatrix}}.
\end{split}
\end{equation}

\section{The evolution submatrices for semi-discrete Active Flux method}\label{appx:submatrices}

\subsection{Evolution submatrices for the Active Flux method}
Using the update formulas given in Section \ref{appx:fourier_update}, the submatrices in the evolution matrix $\mathcal{E}$  of semi-discrete Active Flux, given in \eqref{eq:evolutionmatrix_appx} can be derived as follows
\begin{equation*}
S_{cor} = 
\begin{bmatrix}
0 & c & 0 \\
-c & 0 & 0 \\
0 & 0 & 0
\end{bmatrix}, \quad \quad
\mathcal{E}_{AA} = -S_{cor} = 
\begin{bmatrix}
0 & -c & 0 \\
c & 0 & 0 \\
0 & 0 & 0
\end{bmatrix}
\end{equation*}

\begin{equation*}
\mathcal{E}_{E_H E_H} = 
\begin{bmatrix}
0 & -c & 0 \\
c & \frac{1}{\Delta y}(t_y^{-1} + 4 + t_y) & \frac{1}{\Delta y}(t_y^{-1} - t_y) \\
0 & \frac{1}{\Delta y}(t_y^{-1} - t_y) & \frac{1}{\Delta y}(t_y^{-1} + 4 + t_y)
\end{bmatrix}
\end{equation*}

\begin{equation*}
\mathcal{E}_{E_V E_V} = 
\begin{bmatrix}
\frac{1}{\Delta x}(t_x^{-1} + 4 + t_x) & -c & \frac{1}{\Delta x}(t_x^{-1} - t_x) \\
c & 0 & 0 \\
\frac{1}{\Delta x}(t_x^{-1} - t_x) & 0 & \frac{1}{\Delta x}(t_x^{-1} + 4 + t_x)
\end{bmatrix}
\end{equation*}

\begin{equation*}
\mathcal{E}_{NN} = 
\begin{bmatrix}
\frac{1}{2\Delta x}(t_x^{-1} + 6 + t_x) & -c & \frac{1}{2\Delta x}(t_x^{-1} - t_x) \\
c & \frac{1}{2\Delta y}(t_y^{-1} + 6 + t_y) & \frac{1}{2\Delta y}(t_y^{-1} - t_y) \\
\frac{1}{2\Delta x}(t_x^{-1} - t_x) & \frac{1}{2\Delta y}(t_y^{-1} - t_y) &  \left[ \frac{(t_x^{-1} + 6 + t_x)}{2\Delta x} + \frac{(t_y^{-1} + 6 + t_y)}{2\Delta y} \right]
\end{bmatrix}
\end{equation*}

\begin{equation*}
\mathcal{E}_{A E_H} = \frac{2}{3\Delta y}
\begin{bmatrix}
0 & 0 & 0 \\
0 & 0 & 1 - t_y^{-1} \\
0 & 1 - t_y^{-1} & 0
\end{bmatrix}, \quad \quad 
\mathcal{E}_{A E_V} = \frac{2}{3\Delta x}
\begin{bmatrix}
0 & 0 & 1 - t_x^{-1} \\
0 & 0 & 0 \\
1 - t_x^{-1} & 0 & 0
\end{bmatrix}
\end{equation*}

\begin{equation*}
\mathcal{E}_{AN} = 
\begin{bmatrix}
0 & 0 & \frac{(-1 + t_x)(1 + t_y)}{6\Delta x \, t_x t_y} \\
0 & 0 & \frac{(1 + t_x)(-1 + t_y)}{6\Delta y \, t_x t_y} \\
\frac{(-1 + t_x)(1 + t_y)}{6\Delta x \, t_x t_y} & \frac{(1 + t_x)(-1 + t_y)}{6\Delta y \, t_x t_y} & 0
\end{bmatrix}
\end{equation*}

\begin{equation*}
\mathcal{E}_{E_H A} = -\frac{9}{2\Delta y}
\begin{bmatrix}
0 & 0 & 0 \\
0 & 1 + t_y & 1 - t_y \\
0 & 1 - t_y & 1 + t_y
\end{bmatrix},
\end{equation*}

\begin{equation*}
\mathcal{E}_{E_H E_V} = \frac{1}{2\Delta y}
\begin{bmatrix}
0 & 0 & 0 \\
0 & \frac{(1 + t_x)(1 + t_y)}{t_x} & -\frac{(1 + t_x)(-1 + t_y)}{t_x} \\
0 & -\frac{(1 + t_x)(-1 + t_y)}{t_x} & \frac{(1 + t_x)(1 + t_y)}{t_x}
\end{bmatrix}
\end{equation*}

\begin{equation*}
\mathcal{E}_{E_H N} = 
\begin{bmatrix}
0 & 0 & \frac{1}{\Delta x}(1 - t_x^{-1}) \\
0 & \frac{(1 + t_x)(1 + t_y)^2}{8\Delta y \, t_x t_y} & -\frac{(1 + t_x)(-1 + t_y^2)}{8\Delta y \, t_x t_y} \\
\frac{1}{\Delta x}(1 - t_x^{-1}) & -\frac{(1 + t_x)(-1 + t_y^2)}{8\Delta y \, t_x t_y} & \frac{(1 + t_x)(1 + t_y)^2}{8\Delta y \, t_x t_y}
\end{bmatrix},
\end{equation*}

\begin{equation*}
\mathcal{E}_{E_V A} = -\frac{9}{2\Delta x}
\begin{bmatrix}
1 + t_x & 0 & 1 - t_x \\
0 & 0 & 0 \\
1 - t_x & 0 & 1 + t_x
\end{bmatrix} 
\end{equation*}

\begin{equation*}
\mathcal{E}_{E_V E_H} = \frac{1}{2\Delta x}
\begin{bmatrix}
\frac{(1 + t_x)(1 + t_y)}{t_y} & 0 & -\frac{(-1 + t_x)(1 + t_y)}{t_y} \\
0 & 0 & 0 \\
-\frac{(-1 + t_x)(1 + t_y)}{t_y} & 0 & \frac{(1 + t_x)(1 + t_y)}{t_y}
\end{bmatrix}
\end{equation*}

\begin{equation*}
\mathcal{E}_{E_V N} = 
\begin{bmatrix}
\frac{(1 + t_x)^2(1 + t_y)}{8\Delta x \, t_x t_y} & 0 & -\frac{(-1 + t_x^2)(1 + t_y)}{8\Delta x \, t_x t_y} \\
0 & 0 & \frac{1}{\Delta y}(1 - t_y^{-1}) \\
-\frac{(-1 + t_x^2)(1 + t_y)}{8\Delta x \, t_x t_y} & \frac{1}{\Delta y}(1 - t_y^{-1}) & \frac{(1 + t_x)^2(1 + t_y)}{8\Delta x \, t_x t_y}
\end{bmatrix}
\end{equation*}

\begin{equation*}
\mathcal{E}_{NA} = 
\begin{bmatrix}
0 & 0 & 0 \\
0 & 0 & 0 \\
0 & 0 & 0
\end{bmatrix},\quad \quad
\mathcal{E}_{N E_H} = -\frac{2}{\Delta x}
\begin{bmatrix}
1 + t_x & 0 & 1 - t_x \\
0 & 0 & 0 \\
1 - t_x & 0 & 1 + t_x
\end{bmatrix}, 
\end{equation*}
\begin{equation*}
\mathcal{E}_{N E_V} = -\frac{2}{\Delta y}
\begin{bmatrix}
0 & 0 & 0 \\
0 & 1 + t_y & 1 - t_y \\
0 & 1 - t_y & 1 + t_y
\end{bmatrix}
\end{equation*}
Only the submatrices $\displaystyle \mathcal{E}_{AA}, \mathcal{E}_{E_{H}E_{H}}, \mathcal{E}_{E_{V}E_{V}} \, \mbox{and} \, \mathcal{E}_{NN}$ along the diagonal of $\mathcal{E}$ have the influence of the source terms. This is owing to the fact that that in the semi-discrete Active Flux formulas, the discretization of the source terms in the cell-average evolution formula only involves the cell-averages of the conserved variables and the evolution formula for each point value involves only the respective point value of the conserved variable.

\subsection{Evolution submatrices for the central Active Flux method}
Following a similar procedure as in Section \ref{sec:fourier} for the central Active Flux method, we obtain a system of ordinary differential equations of the form \eqref{eq:evolutionmatrix} with an evolution  matrix $\mathcal{E}$ of the form \eqref{eq:evolutionmatrix_appx} with the submatrices given by\eqref{eq:evolutionmatrix_appx} with the submatrices given by
\begin{equation*}
\mathcal{E}_{AA} =  
\begin{bmatrix}
0 & -c & 0 \\
c & 0 & 0 \\
0 & 0 & 0
\end{bmatrix}, \quad
\mathcal{E}_{E_H E_H} = 
\begin{bmatrix}
0 & -c & 0 \\
c & 0 & \frac{1}{\Delta y}(t_y^{-1} - t_y) \\
0 & \frac{1}{\Delta y}(t_y^{-1} - t_y) & 0
\end{bmatrix},
\end{equation*}

\begin{equation*}
\mathcal{E}_{E_V E_V} = 
\begin{bmatrix}
0 & -c & \frac{1}{\Delta x}(t_x^{-1} - t_x) \\
c & 0 & 0 \\
\frac{1}{\Delta x}(t_x^{-1} - t_x) & 0 & 0
\end{bmatrix}, 
\end{equation*}
\begin{equation*}
\mathcal{E}_{NN} = 
\begin{bmatrix}
0 & -c & \frac{1}{2\Delta x}(t_x^{-1} - t_x) \\
c & 0 & \frac{1}{2\Delta y}(t_y^{-1} - t_y) \\
\frac{1}{2\Delta x}(t_x^{-1} - t_x) & \frac{1}{2\Delta y}(t_y^{-1} - t_y) & 0
\end{bmatrix},
\end{equation*}
\begin{equation*}
\mathcal{E}_{A E_H} = \frac{2}{3\Delta y}
\begin{bmatrix}
0 & 0 & 0 \\
0 & 0 & 1 - t_y^{-1} \\
0 & 1 - t_y^{-1} & 0
\end{bmatrix}, \quad \quad 
\mathcal{E}_{A E_V} = \frac{2}{3\Delta x}
\begin{bmatrix}
0 & 0 & 1 - t_x^{-1} \\
0 & 0 & 0 \\
1 - t_x^{-1} & 0 & 0
\end{bmatrix},
\end{equation*}
\begin{equation*}
\mathcal{E}_{AN} = 
\begin{bmatrix}
0 & 0 & \frac{(-1 + t_x)(1 + t_y)}{6\Delta x \, t_x t_y} \\
0 & 0 & \frac{(1 + t_x)(-1 + t_y)}{6\Delta y \, t_x t_y} \\
\frac{(-1 + t_x)(1 + t_y)}{6\Delta x \, t_x t_y} & \frac{(1 + t_x)(-1 + t_y)}{6\Delta y \, t_x t_y} & 0
\end{bmatrix},
\end{equation*}

\begin{equation*}
\mathcal{E}_{E_H A} = -\frac{9}{2\Delta y}
\begin{bmatrix}
0 & 0 & 0 \\
0 & 0 & 1 - t_y \\
0 & 1 - t_y & 0
\end{bmatrix},
\end{equation*}

\begin{equation*}
\mathcal{E}_{E_H E_V} = \frac{1}{2\Delta y}
\begin{bmatrix}
0 & 0 & 0 \\
0 & 0 & -\frac{(1 + t_x)(-1 + t_y)}{t_x} \\
0 & -\frac{(1 + t_x)(-1 + t_y)}{t_x} & 0
\end{bmatrix},
\end{equation*}

\begin{equation*}
\mathcal{E}_{E_H N} = 
\begin{bmatrix}
0 & 0 & \textcolor{black}{\frac{1}{\Delta x}(1 - t_x^{-1})} \\
0 & 0 & -\frac{(1 + t_x)(-1 + t_y^2)}{8\Delta y \, t_x t_y} \\
\frac{1}{\Delta x}(1 - t_x^{-1}) & -\frac{(1 + t_x)(-1 + t_y^2)}{8\Delta y \, t_x t_y} & 0
\end{bmatrix},
\end{equation*}

\begin{equation*}
\mathcal{E}_{E_V A} = -\frac{9}{2\Delta x}
\begin{bmatrix}
0 & 0 & 1 - t_x \\
0 & 0 & 0 \\
1 - t_x & 0 & 0
\end{bmatrix},
\end{equation*}

\begin{equation*}
\mathcal{E}_{E_V E_H} = \frac{1}{2\Delta x}
\begin{bmatrix}
0 & 0 & -\frac{(-1 + t_x)(1 + t_y)}{t_y} \\
0 & 0 & 0 \\
-\frac{(-1 + t_x)(1 + t_y)}{t_y} & 0 & 0
\end{bmatrix},
\end{equation*}

\begin{equation*}
\mathcal{E}_{E_V N} = 
\begin{bmatrix}
0 & 0 & -\frac{(-1 + t_x^2)(1 + t_y)}{8\Delta x \, t_x t_y} \\
0 & 0 & \textcolor{black}{\frac{1}{\Delta y}(1 - t_y^{-1})} \\
-\frac{(-1 + t_x^2)(1 + t_y)}{8\Delta x \, t_x t_y} & \frac{1}{\Delta y}(1 - t_y^{-1}) & 0
\end{bmatrix},
\quad \mathcal{E}_{NA} = 
\begin{bmatrix}
0 & 0 & 0 \\
0 & 0 & 0 \\
0 & 0 & 0
\end{bmatrix},
\end{equation*}

\begin{equation*}
\mathcal{E}_{N E_H} = -\frac{2}{\Delta x}
\begin{bmatrix}
0 & 0 & 1 - t_x \\
0 & 0 & 0 \\
1 - t_x & 0 & 0
\end{bmatrix}, \quad \quad
\mathcal{E}_{N E_V} = -\frac{2}{\Delta y}
\begin{bmatrix}
0 & 0 & 0 \\
0 & 0 & 1 - t_y \\
0 & 1 - t_y & 0
\end{bmatrix}.
\end{equation*}

\end{appendices}

\end{document}